\newtheorem{theorem}{Theorem}[section]
\newtheorem{corollary}[theorem]{Corollary}
\newtheorem{fact}[theorem]{Fact}
\newtheorem{lemma}[theorem]{Lemma}
\newtheorem{proposition}[theorem]{Proposition}
\newtheorem{claim}[theorem]{Claim}
\theoremstyle{definition}
\newtheorem{definition}[theorem]{Definition}
\newtheorem{remark}[theorem]{Remark}
\begin{document}
\baselineskip=17pt

\title[On the cofinality of the least $\lambda$-strongly compact cardinal]{On the cofinality of the least $\lambda$-strongly compact cardinal}

\author[Zhixing You]{Zhixing You}
\address{
Department of Mathematics and Computer Science\\
University of Barcelona\\
Barcelona 08001, Catalonia, Spain
}
\address{Institute of Mathematics\\
Academy of Mathematics and Systems Science\\
Chinese Academy of Sciences\\
Beijing 100190 \\
People’s Republic of China
}
\address{School of Mathematical Sciences \\
University of Chinese Academy of Sciences \\
Beijing 100049 \\
People’s Republic of China
}
\email{youzhixing16@mails.ucas.ac.cn}

\author[Jiachen Yuan]{Jiachen Yuan}
\address{School of Mathematics\\
University of East Anglia\\
Norwich NR4 7TJ\\
The United Kingdom}
\email{Jiachen.Yuan@uea.ac.uk}

\date{}

\begin{abstract}
In this paper, we characterize the possible cofinalities of the least $\lambda$-strongly compact cardinal.
We show that, on the one hand, for any regular cardinal, $\delta$, that carries a $\lambda$-complete uniform ultrafilter,
it is consistent, relative to the existence of a supercompact cardinal above $\delta$,
that the least $\lambda$-strongly compact cardinal has cofinality $\delta$. On the other hand, provably the cofinality of the least $\lambda$-strongly compact cardinal always carries a $\lambda$-complete uniform ultrafilter.

\end{abstract}

\subjclass[2010]{03E35, 03E55}

\keywords{$\lambda$-strongly compact cardinal, Cofinality, Iterated ultrapower, Radin forcing}
\maketitle
\section{introduction}
In \cite{BMO2014,BM2014}, Bagaria and Magidor introduced the notion of $\lambda$-strong compactness (see Definition \ref{dl}), which generalized the well-known notion of strong compactness.

$\lambda$-strong compactness shares some similarities with strong compactness. For example,
$\lambda$-strong compactness can be characterized in terms of compactness properties of infinite languages, elementary embeddings, ultrafilters, etc. (see \cite{BMO2014,BM2014,U2020}).

It turns out that this large cardinal notion is successful. $\lambda$-strong compactness, especially for the case $\lambda=\omega_1$, provides weaker large cardinal strength to prove various results known to follow from strong compactness. Furthermore, it is actually the exact large cardinal strength of some properties in different areas (see \cite{BMO2014,BM2014}). In addition, recently Goldberg proved
 a celebrated theorem$\footnote{Goldberg verified Woodin's conjecture: in second-order set theory, every two elementary embeddings $j_0,j_1:
 V \rightarrow M$ into the same inner model $M$ agree on ordinals, i.e., $j_0 \upharpoonright \mathrm{Ord}=j_1 \upharpoonright  \mathrm{Ord}$. Actually, he proved that if they don't agree on ordinals,
 then there exists an $\omega_1$-strongly compact cardinal, which implies that they agree on ordinals.
 See also https://www.youtube.com/watch?v=DesDNK1XN3M.}$ in \cite{G2021}. In his proof, the existence of $\omega_1$-strongly compact cardinal is natural and ensures that $\mathrm{SCH}$ holds above it, which implies that the theorem holds.

The least $\lambda$-strongly compact cardinal itself is of particular interest, because it may have very odd properties$\footnote{Recently, Gitik constructed a model of $\mathrm{ZFC}$, relative to the existence of a supercompact cardinal, in which the least $\lambda$-strongly compact cardinal is not strongly compact, but also stays regular. He also constructed a model of $\mathrm{ZFC}$, relative to the existence of two supercompact cardinals, in which the least $\lambda$-strongly compact cardinal is not strong limit (see \cite{Moti Gitik(2020)}).}$. Bagaria and Magidor \cite{BMO2014} showed that this cardinal must be a limit cardinal. But surprisingly, it may not be weakly inaccessible. Namely, it can be singular (see \cite{BM2014}).

However, there are still some limitations about the cofinality of the least $\lambda$-strongly compact cardinal. By a standard argument in \cite{BMO2014}, one can see the cofinality must be greater than or equal to  the least measurable cardinal.

Following the results of Bagaria-Magidor, we were very curious about the exact limitations of the cofinality of the least $\lambda$-strongly compact cardinal. Back in 2019, it was not
clear. This is how our work got started.

In this paper, Theorem \ref{t1} extends the consistency result of Bagaria-Magidor (\cite[Theorem 6.1]{BM2014}) to $\lambda$-measurable cardinals, and Proposition \ref{pro3} shows this result is optimal. As
a corollary (Corollary \ref{c2s}), it shows that relative to the existence of two supercompact
cardinals, for any regular cardinal between them, it is consistent that the cofinality of the
least $\omega_1$-strongly compact cardinal is that cardinal.

\subsection*{The structure of the paper}
The paper covers some basic technical preliminaries about $\lambda$-strongly compact cardinals, Radin forcings and iterated ultrapowers in Section \ref{sec2}. We give the main idea of the proof of our consistency result in Section \ref{sec3}. Finally, in section
\ref{sec4} we prove the consistency result and show that it is optimal.

\section{preliminaries}\label{sec2}
We use V to denote the ground model in which we work. For any ordinals $\alpha<\beta$, $[\alpha, \beta], [\alpha, \beta),(\alpha, \beta]$, and $(\alpha, \beta)$ are as in standard interval notation. Let $\mathrm{id}_{M}$ denote the class
identity function from $M$ to $M$, and we will simply write $\mathrm{id}$ when $M$ is clear from the context.  For a
sequence $u$, let $\mathrm{lh}(u)$ denote the length of $u$. For an elementary embedding $j : V \rightarrow M$
with $M$ transitive, let $\mathrm{crit}(j)$ denote the critical point of $j$.

For every $\gamma$ with Cantor normal form $\gamma=\omega^{\gamma_1}+\dots+\omega^{\gamma_n}$,
where $\gamma_1 \geq \dots \geq \gamma_n$, let $\beta_{\gamma}:=1+\gamma_n$.
By induction, one may easily show that if $\gamma$ is a successor ordinal, then $\beta_{\gamma}=1$; and
if $\gamma$ is a limit ordinal, then $\beta_{\gamma}=\limsup_{\alpha<\gamma}(\beta_{\alpha}+1)$.

For a cardinal $\theta$, a sequence $\langle \mathcal{C}_{\alpha} \ | \ \alpha<\theta^+ \rangle$ is a \emph{$\square_{\theta,\omega}$-sequence} if and only if whenever $\alpha$ is a limit ordinal with $\theta<\alpha<\theta^+$,
\begin{enumerate}
  \item $1 \leq |\mathcal{C}_{\alpha}| \leq \omega$, and
  \item for all $C \in \mathcal{C}_{\alpha}$,
  \begin{enumerate}
    \item $C$ is a club subset of $\alpha$.
    \item $C$ has order type at most $\theta$.
    \item If $\eta$ is a limit point of $C$, then $C \cap \eta \in \mathcal{C}_{\eta}$.
  \end{enumerate}
\end{enumerate}

For every $A$ with $|A| \geq \kappa$, let $\mathcal{P}_{\kappa}(A)=\{x \subseteq A \ | \ |x|<\kappa \}$.
A set $U \subseteq \mathcal{P}_{\kappa}(A)$ is a \emph{measure} if it is a non-principal $\kappa$-complete ultrafilter on  $\mathcal{P}_{\kappa}(A)$.
A measure $U$ on $\mathcal{P}_{\kappa}(A)$ is \emph{fine} if for every $x \in \mathcal{P}_{\kappa}(A)$, $\{y \in \mathcal{P}_{\kappa}(A) \ | \ x\subseteq y \} \in U$. A measure $U$ on $\mathcal{P}_{\kappa}(A)$ is \emph{normal} if for any function $f:\mathcal{P}_{\kappa}(A)
\rightarrow A$ with $\{x \in \mathcal{P}_{\kappa}(A) \ | \ f(x) \in x \} \in U$, there is a set in $U$ on which $f$ is constant.

A cardinal $\kappa$ is \emph{$\alpha$-supercompact} if there exists an elementary embedding $j:V \rightarrow M$ with $M$ transitive such that
$\mathrm{crit}(j)=\kappa$, $j(\kappa)>\alpha$ and $M$ is closed under sequences of length $\alpha$. A cardinal $\kappa$ is \emph{supercompact} if  it is $\alpha$-supercompact for every $\alpha$. Equivalently,
$\kappa$ is supercompact if and only if for every $\alpha \geq \kappa$, there is a normal fine measure on $\mathcal{P}_{\kappa}(\alpha)$
 (see \cite[22]{Kana}).

\subsection{$\lambda$-strongly compact cardinals} \label{scc}

\begin{definition}(\cite{BMO2014,BM2014}) \label{dl}
Suppose $\delta \geq \lambda$ are uncountable cardinals.
\begin{enumerate}
  \item For every $\alpha \geq \delta$, $\delta$ is \emph{$\lambda$-strongly compact up to $\alpha$}
  if there exists a definable elementary embedding $j:V \rightarrow M$ with $M$ transitive, such that
 $\mathrm{crit}(j) \geq \lambda$ and there exists a $D \in M$ such that $j''\alpha \subseteq D$ and $M \models |D|<j(\delta)$.
  \item $\delta$ is \emph{$\lambda$-strongly compact} if $\delta$ is $\lambda$-strongly compact up to $\alpha$ for every $\alpha \geq \delta$.
\end{enumerate}
\end{definition}
By the definition above, it is easy to see that if $\delta$ is $\lambda$-strongly compact, then it is $\lambda'$-strongly compact for every uncountable cardinal $\lambda'<\lambda$, and any cardinal greater tha $\delta$ is also $\lambda$-strongly compact.

We say that $\delta$ is \emph{$\lambda$-measurable} if and only if it is $\lambda$-strongly compact up to $\delta$.

Usuba gave a characterization of $\lambda$-strongly compact cardinals in terms of $\lambda$-complete uniform ultrafilters \cite[Theorem 1.2]{U2020}, which generalized a result of Ketonen. The following proposition is a simple local version of the characterization.
\begin{proposition}\label{pu}
Suppose $\delta \geq \lambda$ are uncountable regular cardinals. Then $\delta$ is $\lambda$-measurable if and only if
$\delta$ carries a $\lambda$-complete uniform ultrafilter, i.e., there is a $\lambda$-complete ultrafilter $U$ over $\delta$
such that every $A \in U$ has cardinality $\delta$.
\end{proposition}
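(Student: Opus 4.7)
The plan is to prove both directions of the equivalence through the standard dictionary between elementary embeddings and ultrafilters: extracting a seed ultrafilter from the embedding in one direction, and taking the ultrapower in the other.

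For $(\Rightarrow)$, I start with a witness $j\colon V\to M$ for $\lambda$-measurability of $\delta$: $\mathrm{crit}(j)\ge\lambda$, together with $D\in M$ satisfying $j''\delta\subseteq D$ and $M\models|D|<j(\delta)$. Replacing $D$ by $D\cap j(\delta)$ I may assume $D\subseteq j(\delta)$. Since $\delta$ is regular in $V$, elementarity forces $j(\delta)$ to be regular in $M$, and combined with $M\models|D|<j(\delta)$ this yields $\eta:=\sup D<j(\delta)$. The key observation is that $j(\beta)<\eta$ for every $\beta<\delta$: clearly $j(\beta)\le\eta$ since $j(\beta)\in D$, and if equality held for some $\beta_0<\delta$ then every $\alpha<\delta$ would give $j(\alpha)\le j(\beta_0)$, hence $\alpha\le\beta_0$ by injectivity, contradicting that $\delta$ is uncountable. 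I then define
\[
U:=\{A\subseteq\delta\mid\eta\in j(A)\}.
\]
Standard seed calculations show that $U$ is an ultrafilter on $\delta$, and $\lambda$-completeness follows from $\mathrm{crit}(j)\ge\lambda$. For uniformity, if $A\subseteq\delta$ has $|A|<\delta$ then by regularity some $\beta<\delta$ bounds $A$, so $j(A)\subseteq j(\beta)$, and $j(\beta)<\eta$ forces $\eta\notin j(A)$, i.e.\ $A\notin U$.

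For $(\Leftarrow)$, I take the ultrapower $j\colon V\to M\cong\mathrm{Ult}(V,U)$. The $\lambda$-completeness of $U$ gives $\mathrm{crit}(j)\ge\lambda$. By {\L}o\'{s}'s theorem, since $\mathrm{id}(\xi)=\xi<\delta$ for every $\xi<\delta$, the seed $D:=[\mathrm{id}]_U$ is an ordinal of $M$ strictly below $j(\delta)$; in particular $M\models|D|\le D<j(\delta)$. For each $\alpha<\delta$, $j(\alpha)=[c_\alpha]_U<[\mathrm{id}]_U=D$ iff $\{\xi<\delta\mid\alpha<\xi\}\in U$, which holds by uniformity of $U$ because the complement has cardinality $\alpha+1<\delta$. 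Hence $j''\delta\subseteq D$, and $D$ witnesses that $\delta$ is $\lambda$-measurable.

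There is no substantial obstacle. The only point deserving care is the injectivity-plus-regularity argument in $(\Rightarrow)$ ruling out $j(\beta_0)=\eta$: this is precisely what lets the seed $\eta$ sit strictly above $j''\delta$ and so produce an ultrafilter that is \emph{uniform}, rather than merely one containing a set of order type $<\delta$.
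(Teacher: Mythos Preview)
Your proof is correct and follows essentially the same approach as the paper: derive a seed ultrafilter from the embedding in the forward direction, and take the ultrapower in the reverse direction. The only cosmetic difference is your choice of seed $\eta=\sup D$ versus the paper's $\sup(j''\delta)$; both lie strictly between $j''\delta$ and $j(\delta)$ and yield the same verification of uniformity.
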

\begin{proof}
If $\delta$ is $\lambda$-measurable, then there exists a definable elementary embedding $j:V \rightarrow M$ with $M$ transitive, such that $\mathrm{crit}(j) \geq \lambda$ and there exists a $D \in M$ so that $j''\delta \subseteq D$
 and $M \vDash |D|<j(\delta)$. Since $\delta$ is regular, we have $j(\delta)$ is regular in $M$. Hence $\sup(j''\delta)<j(\delta)$.
Now we may define a $\lambda$-complete uniform ultrafilter $U$ over $\delta$ by
 $X \in U$ if and only if $X \subseteq \delta$ and $\sup(j''\delta) \in j(X)$.

Conversely, if $\delta$ carries a $\lambda$-complete uniform ultrafilter, say $U$, then the canonical embedding $j_U:V \rightarrow M_U \cong \mathrm{Ult}(V,U)$ satisfies $\mathrm{crit}(j_U) \geq \lambda$ and $\sup(j''\delta) \leq [\mathrm{id}]_U<j(\delta)$. Thus $j_U$ witnesses that $\delta$ is $\lambda$-measurable.
\end{proof}

\begin{theorem}\label{lem3}(\cite{BMO2014})
The least $\lambda$-strongly compact cardinal is a limit cardinal.
\end{theorem}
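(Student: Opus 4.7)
The plan is a proof by contradiction: assume the least $\lambda$-strongly compact cardinal $\delta$ is a successor, say $\delta=\mu^+$, and derive a contradiction by showing that $\mu$ itself is $\lambda$-strongly compact.

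Fix $\alpha\ge\mu$; without loss of generality $\alpha\ge\delta$. Invoking the $\lambda$-strong compactness of $\delta=\mu^+$ up to $\alpha$, choose an elementary embedding $j:V\to M$ with $M$ transitive, $\mathrm{crit}(j)\ge\lambda$, and a covering set $D\in M$ with $j''\alpha\subseteq D$ and $M\models|D|<j(\mu^+)=j(\mu)^+$, hence $M\models|D|\le j(\mu)$. If this inequality is strict, $j$ itself already witnesses that $\mu$ is $\lambda$-strongly compact up to $\alpha$, so the delicate case is $M\models|D|=j(\mu)$.

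In that case my plan is to compose $j$ with an auxiliary ultrapower embedding $k:M\to N$ chosen with $\lambda\le\mathrm{crit}(k)\le j(\mu)$. Fix a bijection $\pi:j(\mu)\to D$ in $M$. The choice $\mathrm{crit}(k)\le j(\mu)$ forces $k$ to be discontinuous at $j(\mu)$ (when $j(\mu)$ is regular in $M$, i.e.\ when $\mu$ is regular), so $\rho:=\sup(k''j(\mu))<k(j(\mu))$ and $|\rho|^N<k(j(\mu))$. Setting $D^*:=k(\pi)''\rho\in N$, one has $|D^*|^N\le|\rho|^N<k(j(\mu))=(k\circ j)(\mu)$, and $k''D\subseteq D^*$ because each $d=\pi(\xi)\in D$ satisfies $k(d)=k(\pi)(k(\xi))$ with $k(\xi)<\rho$. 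Consequently $(k\circ j)''\alpha\subseteq D^*$ and $k\circ j:V\to N$ witnesses $\mu$ is $\lambda$-strongly compact up to $\alpha$, delivering the contradiction.

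The main obstacle will be securing the auxiliary $k$ with critical point at most $j(\mu)$: by elementarity $M$ sees $j(\mu)^+=j(\delta)$ as $\lambda$-strongly compact and hence, via Proposition~\ref{pu} applied inside $M$, as carrying a $\lambda$-complete uniform ultrafilter $W$, but a priori the induced ultrapower has critical point bounded only by $j(\mu)^+$ rather than by $j(\mu)$. I expect to bridge this gap by choosing the original $j$ more carefully so that $j(\mu)$ itself becomes $\lambda$-measurable in $M$ (for instance via Usuba's ultrafilter characterization of $\lambda$-strong compactness, picking the witnessing ultrafilter to live on a cardinal at which the right $\lambda$-completeness is forced), or by splitting into cases on the cofinality of $\mu$: the singular case admits a direct cofinal decomposition $j(\mu)=\sup_{i<j(\mathrm{cf}(\mu))}j(\mu_i)$ inside $M$ which, combined with a pigeonhole over the indices $\{i_\beta:\beta<\alpha\}$ where $j(\beta)\in\pi''j(\mu_{i_\beta})$, yields a subset of $D$ of $M$-size less than $j(\mu)$ already covering $j''\alpha$, bypassing the need for a second ultrapower altogether.
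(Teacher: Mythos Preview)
The paper does not prove this theorem; it merely quotes it from \cite{BMO2014}. So there is no in-paper argument to compare against, and I will just assess your proposal on its own.

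Your overall plan is sound, but the proposal as written is genuinely incomplete at exactly the point you flag. You need an internal ultrapower $k:M\to N$ with $\lambda\le\mathrm{crit}(k)\le j(\mu)$, and neither of your two suggested fixes is carried through. The missing observation is cheap: any $j$ witnessing $\lambda$-strong compactness of $\delta=\mu^+$ must have $\mathrm{crit}(j)\le\delta$ (otherwise $j\restriction\delta=\mathrm{id}$ and the covering set has $M$-size $\ge\delta=j(\delta)$), and since $\mathrm{crit}(j)$ is measurable while $\mu^+$ is a successor, in fact $\mathrm{crit}(j)\le\mu$. Hence there is a measurable $\nu\in[\lambda,\mu]$, and in $M$ the cardinal $j(\nu)\le j(\mu)$ is measurable; taking $k$ to be the ultrapower of $M$ by a normal measure on $j(\nu)$ gives exactly the $k$ you want. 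With this in hand your shrinking-of-$D$ argument goes through verbatim whenever $j(\mu)$ is regular in $M$, i.e.\ when $\mu$ is regular.

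The remaining issue is the singular case, and here your sketch is not correct. Your displayed decomposition $j(\mu)=\sup_{i<j(\mathrm{cf}(\mu))}j(\mu_i)$ is wrong (the cofinal sequence in $M$ is $j(\langle\mu_i\rangle)$, of length $j(\mathrm{cf}(\mu))$, and its terms are not the $j(\mu_i)$), and even granting a correct decomposition the pigeonhole step does not produce a single piece of $M$-size $<j(\mu)$ covering all of $j''\alpha$: the index function $\beta\mapsto i_\beta$ need not have bounded range. Moreover, if $\mathrm{cf}(\mu)<\nu$ then $\mathrm{cf}^M(j(\mu))=\mathrm{cf}(\mu)<j(\nu)=\mathrm{crit}(k)$, so $k$ is \emph{continuous} at $j(\mu)$ and your composition argument collapses. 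The clean way out is to note that when $\mu$ is singular there are no regular cardinals in $[\mu,\mu^+)$, so by the Ketonen--Usuba characterization (cited in the paper just before Proposition~\ref{pu}) the hypothesis ``every regular $\tau\ge\mu^+$ carries a $\lambda$-complete uniform ultrafilter'' already says that every regular $\tau\ge\mu$ does, whence $\mu$ is $\lambda$-strongly compact.
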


\subsection{Radin forcing}\label{rf} In this subsection, we will generally follow \cite[Section 6.1]{BM2014} for the
presentation of Radin forcing. For the sake of completeness, we also review its definition
and some related basic properties, including the coherence of measure sequences (Lemma
\ref{p1}), the characterization of Radin generic objects via the geometric conditions (Theorem \ref{itu}), the construction of Radin generic objects via iterated ultrapowers (Theorem \ref{thm1}). For
the readers’ convenience, we also give proofs for some of these properties. Readers who
are familiar with Radin forcing may skip these details.

We first define \emph{measure sequences}, which are the building blocks of Radin forcing.
\begin{definition}
A non-empty sequence $u=\langle u(\alpha) \ | \ \alpha<\mathrm{lh}(u) \rangle$ is a \emph{measure sequence}
 if there exists a definable elementary embedding $j:V \rightarrow M$ with $M$ transitive such that $u(0)$ is a measurable cardinal, and for each $\alpha$ with $0<\alpha<\mathrm{lh}(u)$, $u \upharpoonright \alpha \in M$ and $u(\alpha)=\{A \subseteq V_{u(0)} \ | \ u \upharpoonright \alpha \in j(A)\}$.
\end{definition}

For simplicity of notation, we write $\kappa(u)$ for $u(0)$, $\mathcal{F}(u)$ for $\bigcap_{0<\alpha<\mathrm{lh}(u)}u(\alpha)$ if the length of $u$ is greater than $1$, and $\mathcal{F}(u)$ for $\{\emptyset\}$ otherwise.

The following lemma is a mild modification of a lemma of Cummings and Woodin \cite[Lemma 5.1]{M2010}, which shows that every measure sequence $u$ with $\mathrm{lh}(u)<\kappa(u)$ is coherent.

\begin{lemma}\label{p1}
Suppose $u=\langle u(\alpha) \ | \ \alpha<\mathrm{lh}(u) \rangle$ is a measure sequence with $1<\mathrm{lh}(u)<\kappa(u)$. For every $\alpha$ with $0< \alpha<\mathrm{lh}(u)$, let $j_{\alpha}:V \rightarrow N_{\alpha} \cong \mathrm{Ult}(V,u(\alpha))$ be the canonical embedding. Then $u \upharpoonright \alpha \in N_{\alpha}$ and the measure sequence of length $\alpha+1$ given by $j_{\alpha}$ is exactly $u \upharpoonright (\alpha+1)$.
\end{lemma}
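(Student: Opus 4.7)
The plan is to exploit the factor embedding between the ultrapower $j_\alpha$ and the embedding $j$ witnessing that $u$ is a measure sequence, in the style of the original Cummings--Woodin argument. Let $j : V \to M$ be the witnessing embedding, so $\mathrm{crit}(j) = \kappa(u) = u(0)$ and $u(\gamma) = \{ A \subseteq V_{u(0)} : u \upharpoonright \gamma \in j(A) \}$ for $0 < \gamma < \mathrm{lh}(u)$. Define $k : N_\alpha \to M$ by $k([f]_{u(\alpha)}) = j(f)(u \upharpoonright \alpha)$; this is an elementary embedding with $k \circ j_\alpha = j$. The idea is to show that $u \upharpoonright \alpha \in N_\alpha$ and that $k$ pointwise fixes $u \upharpoonright \alpha$, after which a short induction delivers the claim.

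First I would verify that $\mathrm{crit}(j_\alpha) = u(0)$, using that $u(\alpha)$ is non-principal (since $u \upharpoonright \alpha$ has rank above $u(0)$ and so is not of the form $j(x)$ for any $x \in V_{u(0)}$) and $u(0)$-complete on the set $V_{u(0)}$ of cardinality $u(0)$. Consequently $V_{u(0)}^V = V_{u(0)}^{N_\alpha}$; moreover every $A \in V_{u(0)+1}^V$ is recovered in $N_\alpha$ as $j_\alpha(A) \cap V_{u(0)}$, so $V_{u(0)+1}^V \subseteq N_\alpha$. Combined with the fact that $k$ fixes each $x \in V_{u(0)}^V$ (via $k(x) = k(j_\alpha(x)) = j(x) = x$), this yields $k \upharpoonright V_{u(0)+1}^V = \mathrm{id}$ and in particular $\mathrm{crit}(k) > u(0)$.

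The core step is to place each $u(\beta)$, for $0 < \beta < \alpha$, explicitly inside $N_\alpha$. Define $f_\beta : V_{u(0)} \to V$ by setting $f_\beta(x) = x(\beta)$ when $x$ is a sequence of length greater than $\beta$, and $f_\beta(x) = \emptyset$ otherwise. Then $j(f_\beta)(u \upharpoonright \alpha) = u(\beta)$, so $k([f_\beta]_{u(\alpha)}) = u(\beta)$. A direct ultrapower computation then shows that $[f_\beta]_{u(\alpha)}$ and $u(\beta)$ share the same elements as subsets of $V_{u(0)+1}^V$, so in fact $u(\beta) = [f_\beta]_{u(\alpha)} \in N_\alpha$; moreover $k$ fixes $u(\beta)$ because its members lie in $V_{u(0)+1}^V$, which is pointwise fixed by $k$. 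The hypothesis $\mathrm{lh}(u) < \kappa(u)$ enters precisely here: it forces $\alpha < u(0) < \mathrm{crit}(k)$, so $\alpha$ itself is fixed by $k$, and the $\alpha$-sequence $u \upharpoonright \alpha = \langle u(\beta) : \beta < \alpha \rangle$ can be assembled inside $N_\alpha$ with $k(u \upharpoonright \alpha) = u \upharpoonright \alpha$.

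With $u \upharpoonright \alpha \in N_\alpha$ fixed by $k$, the remainder is a routine induction on $\beta \leq \alpha$. Let $u'$ denote the measure sequence of length $\alpha+1$ derived from $j_\alpha$: $u'(0) = \mathrm{crit}(j_\alpha) = u(0)$, and for $0 < \beta \leq \alpha$, $u'(\beta) = \{ A \subseteq V_{u(0)} : u' \upharpoonright \beta \in j_\alpha(A) \}$. Assuming inductively that $u' \upharpoonright \beta = u \upharpoonright \beta$, for any $A \subseteq V_{u(0)}$ we have $A \in u'(\beta)$ iff $u \upharpoonright \beta \in j_\alpha(A)$ iff (applying the elementary $k$ and using $k(u \upharpoonright \beta) = u \upharpoonright \beta$ together with $k \circ j_\alpha = j$) $u \upharpoonright \beta \in j(A)$ iff $A \in u(\beta)$. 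The main obstacle is the explicit identification of $u(\beta)$ with $[f_\beta]_{u(\alpha)}$ in the core step, which depends on the rank bound afforded by $\mathrm{lh}(u) < \kappa(u)$; once that is in hand, the factor-map bookkeeping finishes the proof.
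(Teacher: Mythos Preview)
Your approach is essentially the paper's: form the factor map $k:N_\alpha\to M$ by $k([f]_{u(\alpha)})=j(f)(u\upharpoonright\alpha)$, show that $k$ fixes $u\upharpoonright\alpha$, and then run the induction on $\beta\le\alpha$. The only difference is cosmetic: the paper identifies $u\upharpoonright\alpha$ as $[\mathrm{id}]_{u(\alpha)}$ in one stroke (since $k([\mathrm{id}]_{u(\alpha)})=j(\mathrm{id})(u\upharpoonright\alpha)=u\upharpoonright\alpha$) and then argues $k(u\upharpoonright\alpha)=u\upharpoonright\alpha$, whereas you rebuild $u\upharpoonright\alpha$ coordinate-by-coordinate via $[f_\beta]_{u(\alpha)}$.

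There is, however, one genuine slip. From ``$k$ fixes each $x\in V_{u(0)}$'' you conclude ``$k\upharpoonright V_{u(0)+1}^V=\mathrm{id}$ and in particular $\mathrm{crit}(k)>u(0)$''. Pointwise fixing $V_{u(0)}$ only yields $\mathrm{crit}(k)\ge u(0)$; it does not exclude $k(u(0))>u(0)$, and in that case, for $A\in V_{u(0)+1}^V\cap N_\alpha$, elementarity gives $k(A)\subseteq V_{k(u(0))}^M$, which may properly contain $A$ (recall $k(A)\neq k''A$ in general). The paper supplies the missing step: since $k([\mathrm{id}]_{u(\alpha)})=u\upharpoonright\alpha$ has $0$-th entry $u(0)$, the ordinal $u(0)$ lies in $\mathrm{ran}(k)$; its preimage is an ordinal $\le u(0)$, and were it strictly smaller it would be fixed by $k$, so in fact $k(u(0))=u(0)$. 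In your notation the same fix is available via $f_0$: $k([f_0]_{u(\alpha)})=(u\upharpoonright\alpha)(0)=u(0)$ forces $[f_0]_{u(\alpha)}=u(0)$ and hence $k(u(0))=u(0)$. Once this is inserted, your computation $[f_\beta]_{u(\alpha)}=u(\beta)$ goes through, and the remainder of your argument is correct and matches the paper's.
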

\begin{proof}
Let $\kappa:=\kappa(u)$. Since $u$ is a measure sequence, we may find a definable elementary embedding $j:V \rightarrow N$ with $N$ transitive such that
 for every $\alpha$ with $0<\alpha<\mathrm{lh}(u)$,
$u(\alpha)=\{A \subseteq V_{\kappa} \ | \ u \upharpoonright \alpha \in j(A)\}$.

Given any $\alpha$ with $0< \alpha<\mathrm{lh}(u)$. For any $x \in N_{\alpha}$, we can find a function $f:V_{\kappa} \rightarrow V$ representing it, and we denote $x$ by $[f]_{\alpha}$.
Now we will define an embedding $k:N_{\alpha} \rightarrow N$. Let $k([f]_{\alpha})=j(f)(u \upharpoonright \alpha)$ for every $[f]_{\alpha} \in N_{\alpha}$. Then it is easy to see $k$ is well-defined and elementary, and $j=k \circ j_{\alpha}$.

\begin{claim}
$u \upharpoonright \alpha=[\mathrm{id}]_{\alpha} \in N_{\alpha}$.
\end{claim}
\begin{proof}
By the definition of $k$, $k([\mathrm{id}]_{\alpha})=j(\mathrm{id})(u \upharpoonright \alpha)=u \upharpoonright \alpha$. So we only need to prove that $k(u \upharpoonright \alpha)=u \upharpoonright \alpha$.

We will first prove $k(\kappa)=\kappa$.
For every $\beta<\kappa$, $k(\beta)=k(j_{\alpha}(\beta))=j(\beta)=\beta$, so $\mathrm{crit}(k)\geq \kappa$. Meanwhile, $k([\mathrm{id}]_{\alpha}(0))=u(0)=\kappa$
since $k([\mathrm{id}]_{\alpha})=u \upharpoonright \alpha$.
Thus $[\mathrm{id}]_{\alpha}(0)=\kappa$. Consequently, $k(\kappa)=\kappa$ and $\mathrm{crit}(k)>\kappa$.

Now let us prove $k(u\upharpoonright \alpha)=u \upharpoonright \alpha$. It is easy to see $N_{\alpha} \cap V_{\kappa+1}=V_{\kappa+1}= N \cap  V_{\kappa+1}$ and
\begin{equation}\label{e24}
\forall X \in N_{\alpha} \cap V_{\kappa+1}(k(X)=X).
\end{equation}
Take any $\eta<\alpha$. Then $u(\eta)=k''u(\eta) \subseteq k(u(\eta))$ by \eqref{e24}. Since $N_{\alpha} \cap V_{\kappa+1}=V_{\kappa+1}= N \cap  V_{\kappa+1}$, and by the maximality of $u(\eta)$ as a filter, we have $k(u(\eta))=u(\eta)$.
Note also that $k$ fixes the length of $u \upharpoonright \alpha$, we have $k(u\upharpoonright \alpha)=u \upharpoonright \alpha$.
\end{proof}
Now, we prove that the measure sequence of length $\alpha+1$ obtained from $j_{\alpha}$, say $v$, is exactly $u \upharpoonright (\alpha+1)$ by induction on $\beta$ with $\beta \leq \alpha$. Obviously, $v(\beta)=\kappa=u(\beta) \in N_{\alpha}$ if $\beta=0$. Now suppose inductively that $v \upharpoonright \beta=u \upharpoonright \beta \in N_{\alpha}$. For every $X \in V_{\kappa+1}$,
\[
X \in v(\beta) \Leftrightarrow u \upharpoonright \beta=v \upharpoonright \beta \in j_{\alpha}(X) \Leftrightarrow
u \upharpoonright \beta=k(u \upharpoonright \beta) \in k(j_{\alpha}(X))=j(X) \Leftrightarrow X \in u(\beta).
\]
The first and last $``\Leftrightarrow"$ hold by definition, the first equality holds by induction, the second $``\Leftrightarrow "$ holds by elementarity of $k$, and as we proved above, the second equality holds. Hence $v=u \upharpoonright (\alpha+1)$.
\end{proof}

We define next the class $U_{\infty}$ of measure sequences as follows. Let $U_0=\{ u \ | \ u$ is a measure sequence$\}$,
and for every $n<\omega$, let $U_{n+1}=\{ u \in U_n \ | \ U_n \cap V_{\kappa(u)}\in \mathcal{F}(u)\}$. Finally, set $U_{\infty}=\bigcap_{n<\omega}U_n$.
The point is that if $u \in U_{\infty}$, then for every $\alpha$ with $0<\alpha<\mathrm{lh}(u)$, $u(\alpha)$ concentrates on $U_{\infty} \cap V_{\kappa(u)}$.

$U_{\infty}$ can be non-empty when there exists some strong elementary embedding. For instance, if there exists a $j:V \rightarrow M$ with $\mathrm{crit}(j)=\kappa$, $V_{\kappa+2} \subseteq M$ and $M$ is closed under sequences of length $\kappa$, then we can get a measure sequence, say $u$, from $j$ with $\mathrm{lh}(u) \geq (2^{\kappa})^+$ and for every $\alpha<(2^{\kappa})^+$, $u \upharpoonright \alpha \in U_{\infty}$ (see \cite{M2010} or \cite{JW} for details). In particular, the result also holds for every $\alpha$-supercompact embedding with $\alpha \geq |V_{\kappa+2}|$.

In the sequel, if we say $u$ is a measure sequence, we mean that $u$ is in $U_{\infty}$.
Given a measure sequence $u$ of length at least $2$, we may now define the Radin forcing $R_u$.

\begin{definition}\label{rd}
$R_u$ consists of finite sequences $p=\langle (u_0,A_0),\cdots, (u_n,A_n)\rangle$, where
\begin{enumerate}
  \item For every $i \leq n$, $u_i \in U_{\infty}$, $A_i \in \mathcal{F}(u_i)$, and $A_i \subseteq U_{\infty}$.
  \item For every $i<n$, $(u_i,A_i) \in V_{\kappa(u_{i+1})}$.
  \item $u_n=u$.
\end{enumerate}
(We say $u_0,\cdots,u_n$ occur in $p$).

The ordering on $R_u$ is defined as follows. If $p = \langle(u_0, A_0),\cdots,(u_n, A_n)\rangle$ and
$q = \langle(v_0, B_0),\cdots,(v_m, B_m)\rangle$ are in $R_u$, then $p \leq q$ if and only if
\begin{enumerate}
  \item $\{v_0,\cdots,v_m\}\subseteq \{u_0,\cdots,u_n\}$.
  \item For each $j \leq m$ and $i \leq n$, if $v_j = u_i$, then $A_i \subseteq B_j$.
  \item If $i \leq n$ is such that $u_i \notin \{v_0,\cdots,v_m\}$ and if $j \leq m$ is the least such that
$u_i(0) < v_j (0)$, then $u_i \in B_j$ and $A_i \subseteq B_j$.
\end{enumerate}
\end{definition}

Given an $R_u$-generic filter $G$ over $V$, let $g_G:=\langle g_{\alpha} \ | \ \alpha<\mathrm{lh}(g_G) \rangle$
be the \emph{generic sequence given by $G$}. Namely, $g_G$ is the unique sequence consisting of all measure sequence $w$, such that $w \neq u$ and $w$ occurs in some $p\in G$; and if $\alpha<\beta<\mathrm{lh}(g_G)$,
then $\kappa(g_{\alpha})<\kappa(g_{\beta})$. Also let $C_G=\{ \kappa(g_{\alpha}) \ | \ \alpha<\mathrm{lh}(g_G) \}$.
Then $C_G$ is a club subset of $\kappa(u)$. In addition, if $\mathrm{lh}(u)<\kappa(u)$, then there is a condition $p \in R_u$ such that $p$ forces that the order type of $C_G$ is $\omega^{-1+\mathrm{lh}(u)}$. (See \cite{M2010} for details.)

It is not hard to see that $G$ can be recovered from $g_G$, so we may view $g_G$ as the generic object.
Indeed, $G$ consists of all $p \in R_u$ such that
\begin{enumerate}
  \item If $v$ occurs in $p$ and $v \neq u$, then $v=g_{\alpha}$ for some $\alpha <\mathrm{lh}(g_G)$.
  \item For every $\alpha < \mathrm{lh}(g_G)$, $g_{\alpha}$ occurs in some $q \leq p$.
  \end{enumerate}

\begin{definition}\label{dgc}
Suppose $M$ is an inner model of $\mathrm{ZFC}$ and $\delta$ is a limit ordinal. Let $w(\delta)$ be a measure sequence in $M$, and
let $w=\langle w(\alpha) \ | \ \alpha<\delta \rangle$ be a sequence of measure sequences in $M$.
Then $w$ is \emph{geometric} with respect to $w(\delta)$ and $M$ if and only if the following holds:
\begin{enumerate}
  \item\label{single} The sequence $\langle \kappa(w(\alpha)) \ | \ \alpha \leq \delta \rangle$ is increasing continuous.
  \item\label{even} For every limit $\alpha \leq \delta$ and every $A \in M \cap V_{\kappa(w(\alpha))+1}$,
  $A \in \mathcal{F}(w(\alpha))$ if and only if $w \upharpoonright \alpha$ is eventually contained in $A$, i.e.,
  there exists an $\alpha_{A}<\alpha$ such that for every $\gamma$ with $\alpha_A<\gamma<\alpha$, $w(\gamma) \in A$.
\end{enumerate}
\end{definition}

The following theorem, due to W. Mitchell, characterizes Radin generic sequence in terms of the geometric condition. We also follow the notation of
Definition \ref{dgc} in the next theorem.
\begin{theorem}[\cite{M1982}] \label{itu}
A sequence $w$ is geometric with respect to $w(\delta)$ and $M$ if and only if $w$ is a Radin generic sequence given by some $R_{w(\delta)}$-generic filter over $M$.
\end{theorem}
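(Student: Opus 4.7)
The plan is to prove the two directions separately. The ``only if'' direction (generic implies geometric) reduces to routine density arguments in $R_{w(\delta)}$, while the ``if'' direction (geometric implies generic) is the main content of the theorem and requires a reflection/induction argument.

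For the forward direction, fix an $R_{w(\delta)}$-generic filter $G$ over $M$ and its associated sequence $w = g_G$. Clause (1) of Definition \ref{dgc} splits into strict monotonicity of $\langle \kappa(w(\alpha)) : \alpha \leq \delta \rangle$, which is immediate from the definition of $g_G$, and continuity at limits, which follows from the density fact that for every $p \in G$ having $w(\alpha)$ in its support as $u_i$ and every $\gamma < \kappa(u_i)$, the set $A_i \in \mathcal{F}(u_i)$ contains some $v$ with $\kappa(v) > \gamma$; inserting $v$ below $u_i$ in $p$ yields a stronger condition in $R_{w(\delta)}$ that pushes $g_G$ past $\gamma$ before reaching $w(\alpha)$. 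For clause (2), given $A \in \mathcal{F}(w(\alpha))$ and $p \in G$ with $w(\alpha) = u_i$, shrinking $A_i$ to $A_i \cap A$ produces a stronger condition in $G$ which, by condition~(3) of the ordering in Definition \ref{rd}, forces every entry of $g_G$ strictly between the $(i-1)$st entry of $p$ and $w(\alpha)$ to lie in $A$; the converse direction applies the same density argument to $V_{\kappa(w(\alpha))} \setminus A$, using that $A \notin \mathcal{F}(w(\alpha))$ places this complement in some $w(\alpha)(\beta)$.

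For the backward direction, given a geometric $w$ I define $G \subseteq R_{w(\delta)}$ by the prescription listed just after Definition \ref{rd}. The filter axioms are routine: any two conditions in $G$ share the top component $w(\delta)$ and have remaining supports that are finite subsets of $\{w(\alpha) : \alpha < \delta\}$, so merging supports and intersecting restriction sets produces a common refinement, with clause (2) of geometricity absorbing the extra entries of one condition into the shrinking sets of the other up to a finite error. The substantive point is genericity. Fix a dense open $D \in M$ and $p \in G$ with top $(w(\delta), A)$. Working in $M$, I would, for each potential ``insertion shape'' encoding finitely many gaps of $p$ to be filled, consider the set
\[
A_D = \{v \in A : \text{some } q \leq p \text{ in } D \text{ has } v \text{ as its new penultimate entry}\}
\]
and its multi-point analogues. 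By density of $D$ together with the maximality of each filter $w(\delta)(\beta)$, either such an auxiliary set lies in $\mathcal{F}(w(\delta))$ -- in which case clause (2) of geometricity produces some $w(\gamma)$ inside it with $\gamma$ sufficiently large below $\delta$ -- or one reduces to the analogous question for a smaller Radin forcing $R_{w(\gamma)}$ and applies induction on the measure sequence length.

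The main obstacle is the bookkeeping of this tree of insertion shapes and the verification that at each stage the auxiliary sets genuinely belong to the intended filter rather than merely satisfying a dichotomy. This is where Lemma \ref{p1} is indispensable: the identity $u \upharpoonright \beta = [\mathrm{id}]_\beta$ in the ultrapower $N_\beta$ lets sets defined internally in $N_\beta$ be recognized as members of $u(\beta)$ back in $V$, which is precisely what is required to apply geometric clause (2) uniformly at all sub-scales $w(\gamma)$ below $w(\delta)$.
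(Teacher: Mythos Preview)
The paper does not give a proof of Theorem \ref{itu}; it is stated with attribution to Mitchell \cite{M1982} and used thereafter as a black box (in Theorem \ref{thm1} and in the proof of Theorem \ref{t1}). There is thus no proof in the paper to compare your proposal against.

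On the substance of your sketch: the forward direction is fine and standard. In the backward direction your overall architecture --- define $G$ from $w$, verify the filter axioms using clause (2) of geometricity, then for a dense $D$ analyze auxiliary sets of ``good'' insertions and descend inductively to shorter measure sequences --- is the right shape, but the role you assign to Lemma \ref{p1} is misplaced. That lemma concerns coherence of a measure sequence with the ultrapower embeddings it generates; in the geometric-implies-generic direction you are given only the model $M$ and the sequence $w$, with no embedding at hand, so there is no ultrapower $N_\beta$ in which to ``recognize'' anything. The standard argument (see \cite{M1982} or \cite{M2010}) instead hinges on the Prikry property of $R_{w(\delta)}$: given dense $D$ and $p$, one produces a direct extension $p^*\le^* p$ and a finite tree of insertion patterns such that every extension of $p^*$ matching a pattern in the tree lies in $D$; the verification that the auxiliary sets land in the correct filter $w(\delta)(\beta)$ comes from density of $D$ together with normality/diagonal-intersection closure of the measures, and clause (2) of geometricity then supplies the needed insertions from $w$. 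Your $A_D$ is in this spirit, but the engine driving membership in $\mathcal{F}(w(\delta))$ is the Prikry-type combinatorics, not Lemma \ref{p1}.
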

According to \eqref{ev} of Definition \ref{dgc}, for the case $\delta<\kappa(w(0))$, if $w$ is geometric w.r.t. $w(\delta)$ and $M$, i.e., $w$ is a Radin generic sequence given by some $R_{w(\delta)}$-generic filter over $M$, then $\mathcal{F}(w(\alpha))$ concentrates on measure sequences of length less than $\mathrm{lh}(w(\alpha))$ for every $\alpha \leq \delta$. Hence, it is easily seen that
$\mathrm{lh}(w(\alpha))=1$ if $\alpha<\delta$ is a successor ordinal, and $\mathrm{lh}(w(\alpha))=\limsup_{\gamma<\alpha}(\mathrm{lh}(w(\gamma))+1)$ if $\alpha<\delta$ is a limit ordinal. In other words, $\mathrm{lh}(w(\alpha))=\beta_{\alpha}$ for every $\alpha \leq \delta$, where $\beta_{\alpha}$ is defined at the beginning of this section.

Now we may define $u$-iterated ultrapowers as follows.
\begin{definition}\label{it}
Suppose $\mathrm{lh}(u)<\kappa(u)$, and $\delta \leq \omega^{-1+\mathrm{lh}(u)}$ is a limit ordinal.
\begin{enumerate}
  \item $\langle M_{\alpha}, \pi_{\alpha,\alpha'} \ | \ \alpha \leq \alpha' \leq \delta \rangle$ is an \emph{iterated ultrapower} if and only if
\begin{enumerate}
  \item $M_0=V$ and $\pi_{\alpha,\alpha}=\mathrm{id}_{M_{\alpha}}$ for every $\alpha \leq \delta$.
  \item\label{ca3} $M_{\alpha+1} \cong \mathrm{Ult}(M_{\alpha},W_{\alpha})$ is a transitive class, where $W_{\alpha} \in M_{\alpha}$ is a $\kappa_{\alpha}$-complete ultrafilter over $\kappa_{\alpha}$ (or $M_{\alpha}\cap V_{\kappa_{\alpha}}$) for some $\kappa_{\alpha}$, and the ultrapower is constructed
in $M_{\alpha}$; $\pi_{\alpha,\alpha+1}:M_{\alpha} \rightarrow M_{\alpha+1} \cong \mathrm{Ult}(M_{\alpha},W_{\alpha})$ is the canonical embedding, and
for every $\gamma < \alpha$, $\pi_{\gamma,\alpha+1} = \pi_{\alpha,\alpha+1} \circ \pi_{\gamma,\alpha}$.

\item If $\gamma \leq \delta$ is a limit ordinal, then $M_{\gamma}$ is the direct limit of $\langle M_{\alpha}, \pi_{\alpha,\alpha'} \ | \ \alpha \leq \alpha' < \gamma \rangle$,
  and for every $\alpha < \gamma$, $\pi_{\alpha,\gamma}:M_{\alpha} \rightarrow M_{\gamma}$ is the corresponding embedding.
\end{enumerate}
  \item $\langle M_{\alpha}, \pi_{\alpha,\alpha'} \ | \ \alpha \leq \alpha' \leq \delta \rangle$ is a \emph{$u$-iterated ultrapower} if and only if it is an iterated ultrapower, and in \eqref{ca3}, $\kappa_{\alpha}=\pi_{0,\alpha}(\kappa(u))$ and $W_{\alpha}=\pi_{0,\alpha}(u)(\beta_{\alpha})$.
\end{enumerate}
For simplicity of notation, we write $\pi_{\alpha}$  for $\pi_{0,\alpha}$ for every $\alpha\leq \delta$, $\pi$ for $\pi_{\delta}$ and $M$ for $M_{\delta}$. Here we require that the length $\delta$ of a $u$-iterated ultrapower is less than or equal to
$\omega^{-1+\mathrm{lh}(u)}$, because $\beta_{\alpha}$ should be less than $\mathrm{lh}(u)$ for every $\alpha<\delta$.
\end{definition}
Next, we follow the notation of the definition above. Let $\langle M_{\alpha}, \pi_{\alpha,\alpha'} \ | \ \alpha \leq \alpha' \leq \delta \rangle$ be the $u$-iterated ultrapower of length $\delta$, let $w=\langle \pi_{\alpha}(u) \upharpoonright \beta_{\alpha} \ | \ \alpha<\delta \rangle$ and let $w(\delta)=\pi_{\delta}(u) \upharpoonright \beta_{\delta}$.

Now we will use $u$-iterated ultrapower to construct a Radin generic sequence over some target model. We first prove the following lemma.

\begin{lemma}\label{c5}
Suppose $\theta \leq \delta$ is a limit ordinal. If $\eta<\theta$ satisfies that
$\beta_{\alpha}<\beta_{\theta}$ for every $\alpha$ with $\eta \leq \alpha<\theta$, then for every $\bar{A} \in \mathcal{F}(\pi_{\eta}(u) \upharpoonright \beta_{\theta}) $, we have
\begin{equation}\label{q10}
\{ w(\alpha) \ | \ \eta \leq \alpha <\theta \} \subseteq \pi_{\eta,\theta}(\bar{A}).
\end{equation}
In particular, for every limit $\theta \leq \delta$, if $A \in \mathcal{F}(\pi_{\theta}(u) \upharpoonright \beta_{\theta})=\mathcal{F}(w(\theta))$, then
\begin{equation}\label{q4}
w \upharpoonright \theta \text{ is eventually contained in } A.
\end{equation}
\end{lemma}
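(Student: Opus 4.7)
My plan for the first assertion is to fix an arbitrary $\alpha$ with $\eta \le \alpha < \theta$ and track the measure sequence $w(\alpha)=\pi_\alpha(u)\upharpoonright\beta_\alpha$ through the iteration in three pieces: push $\bar A$ forward from $M_\eta$ to $M_\alpha$ to land inside $W_\alpha$, use the single ultrapower step $M_\alpha \to M_{\alpha+1}$ (where the coherence Lemma \ref{p1} is the engine) to place $w(\alpha)$ inside $\pi_{\eta,\alpha+1}(\bar A)$, and then observe that the tail embedding $\pi_{\alpha+1,\theta}$ fixes the low-rank object $w(\alpha)$.

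For the details, since $0<\beta_\alpha<\beta_\theta$ by hypothesis, the defining intersection for $\mathcal F$ yields $\bar A\in\pi_\eta(u)(\beta_\alpha)$. Because $\beta_\alpha<\mathrm{lh}(u)<\kappa(u)\le\mathrm{crit}(\pi_{\eta,\alpha})$, $\pi_{\eta,\alpha}$ fixes $\beta_\alpha$ and sends $\pi_\eta(u)$ to $\pi_\alpha(u)$, so by elementarity $\pi_{\eta,\alpha}(\bar A)\in\pi_\alpha(u)(\beta_\alpha)=W_\alpha$. Applying Lemma \ref{p1} inside $M_\alpha$ to the measure sequence $\pi_\alpha(u)$ at index $\beta_\alpha$ (valid since by elementarity $\mathrm{lh}(\pi_\alpha(u))<\kappa_\alpha$ and $\beta_\alpha<\mathrm{lh}(u)\le\mathrm{lh}(\pi_\alpha(u))$) identifies $W_\alpha$ with the measure read off from the canonical embedding $\pi_{\alpha,\alpha+1}$; that is, $W_\alpha=\{B\mid\pi_\alpha(u)\upharpoonright\beta_\alpha\in\pi_{\alpha,\alpha+1}(B)\}$. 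Hence $w(\alpha)\in\pi_{\alpha,\alpha+1}(\pi_{\eta,\alpha}(\bar A))=\pi_{\eta,\alpha+1}(\bar A)$. Finally, $w(\alpha)$ is a sequence of length $\beta_\alpha<\kappa_\alpha$ of subsets of $V^{M_\alpha}_{\kappa_\alpha}$, hence has rank well below $\kappa_{\alpha+1}\le\mathrm{crit}(\pi_{\alpha+1,\theta})$, so $\pi_{\alpha+1,\theta}(w(\alpha))=w(\alpha)\in\pi_{\eta,\theta}(\bar A)$, giving \eqref{q10}.

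For the ``in particular'' clause, fix a limit $\theta\le\delta$ and $A\in\mathcal F(w(\theta))$. Since $\theta$ is a limit ordinal, $M_\theta$ is the direct limit of the previous stages, so I can pick $\eta_0<\theta$ and $\bar A\in M_{\eta_0}$ with $\pi_{\eta_0,\theta}(\bar A)=A$; elementarity (using once more that $\mathrm{crit}(\pi_{\eta_0,\theta})\ge\kappa(u)$ dwarfs $\beta_\theta$) then transfers $A\in\mathcal F(\pi_\theta(u)\upharpoonright\beta_\theta)$ back to $\bar A\in\mathcal F(\pi_{\eta_0}(u)\upharpoonright\beta_\theta)$. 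An easy inspection of the Cantor normal form of $\theta$ (writing $\theta=(\theta-\omega^{\gamma_n})+\omega^{\gamma_n}$ and checking both cases, $\gamma_n$ a successor or a limit) shows that $\beta_\alpha<\beta_\theta$ holds for every $\alpha$ with $\theta-\omega^{\gamma_n}\le\alpha<\theta$; enlarging $\eta_0$ to such an $\eta<\theta$ satisfies the hypothesis of the main part, and \eqref{q10} delivers \eqref{q4}. The main obstacle is the bridge via Lemma \ref{p1}: one must recognise that the ultrapower step by $W_\alpha$ is precisely the step whose canonical embedding witnesses the measure-sequence character of $\pi_\alpha(u)\upharpoonright(\beta_\alpha+1)$, so that the defining property of $\pi_\alpha(u)(\beta_\alpha)$ can be invoked to move $\pi_{\eta,\alpha}(\bar A)$-membership into $\pi_{\eta,\alpha+1}(\bar A)$-membership for $w(\alpha)$; the surrounding bookkeeping with critical points and with the direct-limit representation of $M_\theta$ is routine.
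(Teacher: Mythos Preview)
Your proof is correct and follows essentially the same approach as the paper's: push $\bar A$ forward to $M_\alpha$, use Lemma~\ref{p1} to see that the ultrapower step $\pi_{\alpha,\alpha+1}$ witnesses $\pi_\alpha(u)\upharpoonright(\beta_\alpha+1)$ so that $w(\alpha)\in\pi_{\eta,\alpha+1}(\bar A)$, and then note that the tail map fixes $w(\alpha)$; for the second clause you likewise choose a preimage at a stage large enough that the $\beta$-condition holds on the tail. The only cosmetic differences are that you extract $\bar A\in\pi_\eta(u)(\beta_\alpha)$ before pushing forward (the paper pushes forward first and then drops to the $\beta_\alpha$-coordinate) and that you spell out the Cantor-normal-form reason for the existence of a tail with $\beta_\alpha<\beta_\theta$, which the paper leaves implicit.
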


\begin{proof}
For every $\alpha$ with $\eta \leq \alpha<\theta$, since $\bar{A} \in \mathcal{F}(\pi_{\eta}(u) \upharpoonright \beta_{\theta})$ and $\pi_{\eta,\alpha}$ is elementary, we have $M_{\alpha} \models  \pi_{\eta,\alpha}(\bar{A})\in \mathcal{F}(\pi_{\alpha}(u) \upharpoonright \beta_{\theta})$.
By our assumption, $\beta_{\alpha}<\beta_{\theta}$, so
$M_{\alpha} \models \pi_{\eta,\alpha}(\bar{A})\in \pi_{\alpha}(u)(\beta_{\alpha})$. Meanwhile,
by the definition of the $u$-iterated ultrapower $\langle M_{\alpha'}, \pi_{\alpha',\alpha''} \ | \ \alpha' \leq \alpha'' \leq \delta \rangle$,
we have $\pi_{\alpha,\alpha+1}:M_{\alpha} \rightarrow M_{\alpha+1} \cong \mathrm{Ult}(M_{\alpha},\pi_{\alpha}(u)(\beta_{\alpha}))$. Note also that $\pi_{\alpha}(u)$ is a measure sequence in $M_{\alpha}$, we have
the measure sequence of length $\beta_{\alpha}+1$ obtained from $\pi_{\alpha,\alpha+1}$ is exactly $\pi_{\alpha}(u)\upharpoonright (\beta_{\alpha}+1)$ by Lemma \ref{p1}. Hence,
\[
M_{\alpha+1} \models w(\alpha)=\pi_{\alpha}(u)\upharpoonright \beta_{\alpha} \in \pi_{\alpha,\alpha+1}(\pi_{\eta,\alpha}(\bar{A}))=\pi_{\eta,\alpha+1}(\bar{A}).
\]
Since $\pi_{\alpha+1,\theta}$ is elementary, and $w(\alpha)$ is fixed by $\pi_{\alpha+1,\theta}$, i.e., $\pi_{\alpha+1,\theta}(w(\alpha))=w(\alpha)$, we have
$M_{\theta} \models w(\alpha) \in \pi_{\eta,\theta}(\bar{A})$.
Hence $w(\alpha) \in \pi_{\eta,\theta}(\bar{A})$. So \eqref{q10} holds.

Now take any limit $\theta \leq \delta$, we prove that \eqref{q4} holds. Since $A \in \mathcal{F}(\pi_{\theta}(u) \upharpoonright \beta_{\theta})$,
we can pick a sufficient large $\bar{\theta}<\theta$, so that $\beta_{\alpha}<\beta_{\theta}$
for every $\alpha$ with $\bar{\theta} \leq \alpha<\theta$, and
 there exists an $\bar{A} \in M_{\bar{\theta}}$ such that $\pi_{\bar{\theta},\theta}(\bar{A})=A$. Then $\bar{A} \in \mathcal{F}(\pi_{\bar{\theta}}(u) \upharpoonright \beta_{\theta})$. Hence $\{ w(\alpha) \ | \ \bar{\theta} \leq \alpha<\theta \} \subseteq A$,
which means $w \upharpoonright \alpha$ is eventually contained in $A$.
\end{proof}
The point of the lemma above is that by Lemma \ref{p1}, for every $\alpha < \delta$, the measure
sequence of length $\beta_{\alpha}+1$ obtained from $\pi_{\alpha,\alpha+1}$ is exactly
$\pi_{\alpha}(u)\upharpoonright (\beta_{\alpha}+1)$. So for any iterated ultrapower $\langle M_{\alpha}, \pi_{\alpha,\alpha'} \ | \ \alpha \leq \alpha' \leq \delta \rangle$, if the measure sequence of length $\beta_{\alpha}$ + 1 obtained from $\pi_{\alpha,\alpha+1}$ is $\pi_{\alpha}(u) \upharpoonright (\beta_{\alpha} + 1)$ for every $\alpha<\delta$, then the lemma above also holds. For example, we may let $\pi_{\alpha,\alpha+1}$ be the ultrapower map given by $\pi_{\alpha}(u)(\theta)$ for every $\alpha < \delta$ in
\eqref{ca3} if there is a $\theta$ with $\delta < \theta < \mathrm{lh}(u)$. Then the lemma above also holds for this new iterated ultrapower.

The following theorem is essentially due to Radin \cite{R1982}, see also \cite[Theorem 6.7.1]{JW}.
\begin{theorem}[\cite{R1982}]\label{thm1}
The sequence $w$ is geometric with respect to $w(\delta)$ and $M$(=$M_{\delta}$), and we have an $R_{w(\delta)}$-generic filter over $M$ given by $w$.
\end{theorem}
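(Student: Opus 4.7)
The plan is to verify that $w$ meets the two requirements of Definition \ref{dgc}, i.e., to check that $w$ is geometric with respect to $w(\delta)$ and $M$; an appeal to Theorem \ref{itu} then delivers the desired generic sequence. The first clause, that $\langle\kappa(w(\alpha)):\alpha\leq\delta\rangle=\langle\pi_{\alpha}(\kappa(u)):\alpha\leq\delta\rangle$ is increasing continuous, is routine: increasingness holds because $\pi_{\alpha,\alpha+1}$ is an ultrapower by a $\kappa_{\alpha}$-complete measure moving $\kappa_{\alpha}$, while continuity at a limit $\gamma$ follows from the direct-limit structure together with the fact that every ordinal below $\kappa_{\gamma}=\pi_{\gamma}(\kappa(u))$ is, by elementarity, fixed by $\pi_{\alpha,\gamma}$ for some $\alpha<\gamma$ and therefore already lies below $\kappa_{\alpha}$.

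For the second clause, the direction ``$A\in\mathcal{F}(w(\alpha))\Rightarrow w\upharpoonright\alpha$ is eventually contained in $A$'' is exactly \eqref{q4} of Lemma \ref{c5}. The converse I would argue contrapositively: suppose $A\in M\cap V_{\kappa(w(\alpha))+1}$ satisfies $A\notin\mathcal{F}(w(\alpha))$, and pick $\zeta$ with $0<\zeta<\beta_{\alpha}$ such that $B:=V_{\kappa(w(\alpha))}\setminus A\in w(\alpha)(\zeta)=\pi_{\alpha}(u)(\zeta)$. Because $\alpha$ is a limit and $M_{\alpha}$ is the direct limit, I can pick $\bar\alpha<\alpha$ and $\bar B\in M_{\bar\alpha}$ with $\pi_{\bar\alpha,\alpha}(\bar B)=B$; elementarity then gives $\bar B\in\pi_{\bar\alpha}(u)(\zeta)$. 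Now for any $\gamma\in[\bar\alpha,\alpha)$ with $\beta_{\gamma}=\zeta$, the argument from the proof of Lemma \ref{c5} transfers essentially verbatim: $\pi_{\bar\alpha,\gamma}(\bar B)\in\pi_{\gamma}(u)(\beta_{\gamma})$, so by Lemma \ref{p1} $w(\gamma)\in\pi_{\gamma,\gamma+1}(\pi_{\bar\alpha,\gamma}(\bar B))=\pi_{\bar\alpha,\gamma+1}(\bar B)$, and since $w(\gamma)$ lies below the critical point of $\pi_{\gamma+1,\alpha}$ we obtain $w(\gamma)\in\pi_{\bar\alpha,\alpha}(\bar B)=B$, i.e., $w(\gamma)\notin A$.

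What remains is the ordinal-arithmetic claim that $\{\gamma<\alpha:\beta_{\gamma}=\zeta\}$ is cofinal in $\alpha$ whenever $\alpha$ is a limit and $0<\zeta<\beta_{\alpha}$; I expect this cofinality statement to be the main technical step. Writing $\alpha=\omega^{\gamma_1}+\dots+\omega^{\gamma_n}$ in Cantor normal form and letting $\xi$ be the unique ordinal with $1+\xi=\zeta$, the inequality $\zeta<\beta_{\alpha}=1+\gamma_n$ forces $\xi<\gamma_n$. One then manufactures cofinally many $\gamma<\alpha$ of the shape $\omega^{\gamma_1}+\dots+\omega^{\gamma_{n-1}}+\eta$ with $\eta<\omega^{\gamma_n}$ terminating in $\omega^{\xi}$, by varying the penultimate term cofinally below $\omega^{\gamma_n}$ and treating the cases where $\gamma_n$ is a successor versus a limit separately. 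Combining these ingredients yields cofinally many $\gamma<\alpha$ with $w(\gamma)\notin A$, completing the verification of the second clause, after which Theorem \ref{itu} delivers the promised $R_{w(\delta)}$-generic filter over $M$ whose generic sequence is $w$.
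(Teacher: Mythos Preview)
Your proposal is correct and follows essentially the same approach as the paper: clause~(1) is routine, the forward direction of clause~(2) is exactly Lemma~\ref{c5}, and the converse hinges on the cofinality of $\{\gamma<\alpha:\beta_{\gamma}=\zeta\}$ in $\alpha$. The only organizational difference is that for the converse the paper replaces $A$ by the set $B=M\cap V_{\kappa(w(\alpha))}\setminus\{x\in A:\mathrm{lh}(x)=\gamma\}$, observes $B\in\mathcal{F}(w(\alpha))$, and then simply invokes the already-established forward direction---a slightly slicker maneuver than your direct rerun of the Lemma~\ref{c5} computation for a single measure, though the content is the same.
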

\begin{proof}
We only prove \eqref{even} of the geometric condition here, i.e., for every limit $\alpha \leq \delta$ and every $A \in M \cap V_{\kappa(w(\alpha))+1}$,
$A \in \mathcal{F}(w(\alpha))$ if and only if $w \upharpoonright \alpha \text{ is eventually contained in } A$.

If $A \in \mathcal{F}(w(\alpha))$,
then by Lemma \ref{c5}, $w \upharpoonright \alpha$ is eventually contained in $A$.

If $A \notin \mathcal{F}(w(\alpha))$, then $A \notin \pi_{\alpha}(u)(\gamma)$ for some $\gamma<\beta_{\alpha}$.
Let $B=M \cap V_{\kappa(w(\alpha))} \setminus \{ x \in A \ | \ \mathrm{lh}(x)=\gamma \}$, then $B \in \mathcal{F}(w(\alpha))$. Hence, $w \upharpoonright \alpha$ is eventually contained in $B$. Note also that $\langle \eta<\alpha \ | \ \beta_{\eta}=\gamma \rangle$ is unbounded in $\alpha$, we have $w \upharpoonright \alpha$ is not eventually contained in $A$.

Therefore, $w$ is geometric w.r.t. $w(\delta)$ and $M$, and we have a $R_{w(\delta)}$-generic filter over $M$ obtained from $w$.
\end{proof}
\section{main idea of the consistency result (theorem \ref{t1})} \label{sec3}
Suppose $\kappa$ is a supercompact cardinal and $\delta<\kappa$ is a $\lambda$-measurable cardinal. Let $j:V \rightarrow M$ be a suitable supercompact ultrapower map, let $i:M \rightarrow N$ be an ultrapower map given by some $\lambda$-complete uniform ultrafilter in $M$, and let $\pi=i \circ j$. Let $u$ be the measure sequence of length $\delta$ obtained from $j$, and let $G$ be a suitable $R_u$-generic filter over $V$.

For the purpose of making this paper easier to read, we next give the idea of the proof of Bagaria-Magidor from \cite[Theorem 6.1]{BM2014}, and our idea of the proof of Theorem \ref{t1}.

In the proof of Bagaria-Magidor, they only consider the case $\lambda=\delta$, i.e., $\delta$ is measurable, and take the Radin forcing $R_u$ to turn $\kappa$ into a $\lambda$-strongly compact cardinal.

To prove the $\lambda$-strong compactness of $\kappa$, they lift the composite embedding $\pi=i \circ j$ in a $\pi(R_u) \verb|\| R_{i(u) \upharpoonright \delta}$-generic extension of $V[G]$. Here, $j$ grants the $\delta$-strong compactness of $\kappa$ at the end, and $i$ makes $\langle (i(u) \upharpoonright \delta,i(A)) \rangle$ addible to $\pi(\langle (u,A)\rangle)$ for every $\langle (u,A)\rangle \in G$. Namely,
$\langle (i(u) \upharpoonright \delta,i(A)),(\pi(u),\pi(A) \rangle \in \pi(R_u)$ for every $\langle (u,A)\rangle \in G$. In addition, since $R_u$ has a particular closure property, $i''g_G$ can generate an $R_{i(u)\upharpoonright \delta}$-generic filter by a variation of the transfer argument (see \cite{BM2014}, also \cite[15]{J2010}). Thus by Silver's criterion (see \cite[Proposition 9.1]{J2010}), a lifting embedding of $\pi$ can be obtained.
However, this embedding is not definable in $V[G]$. To remedy this, Bagaria-Magidor use a closure argument, which relies not only on the closure of the Radin forcing itself, but also on the closure of $N$, to show that the filter generated by the lifted embedding
is $\lambda$-complete. Thus $\kappa$ is $\lambda$-strongly compact in $V[G]$ (here and next, actually $\kappa$ is $\lambda$-strongly compact up to $\kappa'$ for some $\kappa'$, but a simple trick can solve this problem by lifting class-many embeddings with the same $u$).

In our proof, we also take the Radin forcing $R_u$ to turn $\kappa$ into a $\lambda$-strongly compact cardinal, but handle the general case (i.e., $\lambda$ may not equal to $\delta$).
The major difference, or the novelty of the proof, is the lifting argument. The argument is more complicated in the general case, because $\delta$ is $\lambda$-measurable, and so it may have stronger consistency strength than measurability (however, we don’t know that under the existence of a supercompact cardinal, whether it is possible to make some regular cardinal, for example, $\lambda^+$, a $\lambda$-measurable cardinal). We next give more details about the lifting argument.

We start with the strategy of Bagaria-Magidor, and we still consider to lift the composite embedding $\pi$ by using Silver's criterion.
But there is a problem. There may exist unboundedly many $\alpha<\delta$ with $\sup(i''\alpha)<i(\alpha)$ below $\delta$, because $\delta$ may surpass the critical point of $\pi$. Then for every such $\alpha$, there will be a gap below $\pi(g_{\alpha})$ to be filled in, namely, a generic
object in a similar sense of $\pi(R_u) \verb|\| R_{i(u) \upharpoonright \delta}$.
But there is no place to fill in the gap below $\pi(g_{\alpha})$,
since there are unboundedly many $\pi(g_{\gamma})$ below $\pi(g_{\alpha})$.
Hence, we can't lift the embedding $\pi$.

To overcome the problem, we invoke Theorem \ref{thm1}, which states that a Radin generic object can be generated by some iterated ultrapower, to fill in a gap. However, we may need to fill in many gaps. Hence, a sequence of iterated ultrapowers should be taken to fill in all these gaps (an iterated ultrapower above $\kappa$ is also taken to fill the gap above $\kappa$, i.e., the counterpart of $\pi(R_u) \verb|\| R_{i(u) \upharpoonright \delta}$, so that we can avoid the closure argument).

So we may get an iteration, also say $\pi$\footnote{We exchanged the order of the supercompact embedding and the iteration below $\kappa$ in $\pi$ in the proof of Theorem \ref{t1}, so that the Radin sequence generated by $\pi''g_G$ and the iteration has a uniform definition.}, which lives in $V[G]$ since it is guided by the $R_u$-generic object $G$ over $V$. And we can build a $\pi(R_{u})$-generic object $H$ over the target model in $V[G]$. In addition, $\pi''G \subseteq H$. So we may obtain a lifting embedding of $\pi$ in $V[G]$ by using Silver's criterion, which witnesses the $\lambda$-strong compactness of $\kappa$.

\section{Main Results} \label{sec4}

The following well-known fact shows that Radin generic sequence above an $\omega_1$-strongly compact cardinal destroys $\omega_1$-strong compactness of the smaller cardinal.

\begin{fact}[Folklore]\label{fact41}
Suppose $u$ is a measure sequence of length at least $2$. Then in a $R_u$-generic extension of $V$, there is no $\omega_1$-strongly compact cardinal below $\kappa(u)$.
\end{fact}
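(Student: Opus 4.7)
The plan is to argue by contradiction. Suppose some cardinal $\mu < \kappa(u)$ is $\omega_1$-strongly compact in $V[G]$. I will produce an ordinal $\beta$ with $\mu < \beta \leq \kappa(u)$ and $\mathrm{cf}^{V[G]}(\beta) = \omega$, and then derive a contradiction from the fine $\omega_1$-complete ultrafilter on $\mathcal{P}_\mu(\beta)$ that $\omega_1$-strong compactness of $\mu$ forces to exist in $V[G]$.

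For the construction of $\beta$: by Theorem \ref{itu} the Radin club $C_G = \{\kappa(g_\alpha) \ | \ \alpha < \mathrm{lh}(g_G)\}$ is unbounded in $\kappa(u)$, and since $\mathrm{lh}(u) \geq 2$ the generic sequence $g_G$ has length at least $\omega$, so $C_G \cap (\mu, \kappa(u))$ is infinite. Enumerate the first $\omega$ entries of $g_G$ whose critical points exceed $\mu$ as $(g_n)_{n < \omega}$ and set $\beta = \sup_{n < \omega} \kappa(g_n)$: then $\mu < \beta \leq \kappa(u)$ (so $|\beta|^{V[G]} \geq \mu$, since $\mu$ is a cardinal in $V[G]$ by assumption), and the cofinal sequence $(\kappa(g_n))_{n < \omega}$ witnesses $\mathrm{cf}^{V[G]}(\beta) = \omega$.

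For the contradiction, I invoke the standard fine-ultrafilter reformulation of $\omega_1$-strong compactness extracted from Definition \ref{dl}: given a definable elementary $j : V[G] \to M$ witnessing $\omega_1$-strong compactness of $\mu$ up to $\beta$, with $D \in M$ satisfying $j''\beta \subseteq D$ and $M \models |D| < j(\mu)$, the recipe $X \in U \Leftrightarrow D \in j(X)$ defines a fine $\omega_1$-complete ultrafilter $U$ on $\mathcal{P}_\mu(\beta)$ in $V[G]$. Push $U$ forward along $f(x) := \sup(x)$ to obtain an $\omega_1$-complete ultrafilter $U' := f_*(U)$ on $\beta$. Since $U$ is fine, $\{x \ | \ \alpha \in x\} \in U$ for every $\alpha < \beta$; this set is contained in $f^{-1}(\{\gamma < \beta \ | \ \gamma \geq \alpha\})$, so each tail $T_\alpha := \{\gamma < \beta \ | \ \gamma \geq \alpha\}$ lies in $U'$. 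Applying $\omega_1$-completeness of $U'$ to the countable family $(T_{\kappa(g_n)})_{n < \omega}$ yields $\bigcap_{n < \omega} T_{\kappa(g_n)} \in U'$; but this intersection equals $\{\gamma < \beta \ | \ \gamma \geq \beta\} = \emptyset$, contradicting that $U'$ is an ultrafilter.

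The only step whose justification is not entirely routine is the fine-ultrafilter reformulation used above, which (although not explicitly recorded in the preliminaries) is a short verification from Definition \ref{dl}: one may assume $D \subseteq j(\beta)$ of $M$-cardinality below $j(\mu)$, and $\mathrm{crit}(j) \geq \omega_1$ delivers $\omega_1$-completeness of the resulting $U$ while $j''\beta \subseteq D$ gives fineness. With that translation in hand, the remainder of the argument is just the observation that no $\omega$-cofinal ordinal can support an $\omega_1$-complete ultrafilter whose filter of tails is nontrivial.
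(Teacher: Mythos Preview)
Your argument has a genuine gap: the pushforward map $f(x)=\sup(x)$ does \emph{not} land in $\beta$. Since $\mathrm{cf}^{V[G]}(\beta)=\omega<\mu$, there are sets $x\in\mathcal{P}_\mu(\beta)$ cofinal in $\beta$, and in fact $U$ concentrates on them: by fineness and $\omega_1$-completeness, $\{x:\{\kappa(g_n):n<\omega\}\subseteq x\}\in U$, and every such $x$ has $\sup(x)=\beta$. Thus $f_*(U)$ is the principal ultrafilter on $\beta+1$ concentrated at $\beta$, and your tail computation collapses to the harmless $\bigcap_n[\kappa(g_n),\beta]=\{\beta\}$. More tellingly, your argument nowhere uses that we are in a Radin extension beyond producing some $\beta>\mu$ of countable cofinality; if it worked, it would show that no $\omega_1$-strongly compact $\mu$ can have \emph{any} ordinal of countable cofinality above it (e.g.\ $\mu+\omega$), which is absurd. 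The Ketonen-style transfer from fine measures on $\mathcal{P}_\mu(\beta)$ to $\omega_1$-complete uniform ultrafilters on $\beta$ (as in Proposition~\ref{pu}) requires $\beta$ to be \emph{regular}.

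The paper's proof avoids this by appealing to a genuinely combinatorial incompatibility: the Prikry-type sequence in $C_G\setminus\gamma$ forces $\square_{\theta,\omega}$ to hold at some $\theta>\gamma$ (by Cummings--Schimmerling), and then one runs the standard argument that an $\omega_1$-strongly compact cardinal below $\theta$ kills $\square_{\theta,\omega}$, via an embedding $k$ with $\sup(k''\theta^+)<k(\theta^+)$ and a threading of the square sequence at $\sup(k''\theta^+)$. So the Radin extension is used not merely to create countable cofinalities, but to plant a square principle above the putative $\omega_1$-strongly compact cardinal.
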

\begin{proof}
Let $\kappa:=\kappa(u)$ and let $G$ be an $R_u$-generic filter. Suppose, assuming a contradiction, that there is a $\gamma<\kappa$ such that $\gamma$ is $\omega_1$-strongly compact.
Note that there is a Prikry sequence contained in $C_G \setminus \gamma$. Then there is a $\square_{\theta,\omega}$-sequence, say $\vec{\mathcal{C}}=\langle \mathcal{C}_{\alpha} \ | \ \alpha<\theta^+ \rangle$, for some $\theta \in C_G \setminus \gamma$ (see \cite[Theorem 4.2]{CS2002}).
By the $\omega_1$-strong compactness of $\gamma$, there exists an elementary embedding $k:V[G] \rightarrow M'$ such that $\sup(k''\theta^+)<k(\theta^+)$. Then $k(\vec{C})$ is a
$\square_{k(\theta),\omega}$-sequence in $M'$. Let $\beta :=\sup(k''\theta^+)$, and pick a $C' \in k(\vec{\mathcal{C}})(\beta)$. Then we have
\begin{enumerate}[label=\upshape(\roman*), leftmargin=*, widest=iii]
    \item $C'$ is a club subset of $\beta$.
    \item\label{item2} $C'$ has order type at most $k(\theta)$.
    \item\label{item3} If $\alpha$ is a limit point of $C'$, then $C' \cap \alpha \in \mathcal{C}_{\alpha}$.
\end{enumerate}

It is easy to see that $k''\theta^+$ is a stationary subset of $\beta$, so there are unboundedly many $\alpha<\theta^+$, such that $k(\alpha)$ is a limit point of $C'$. For any such $\alpha$, by \ref{item3}, we have $C' \cap k(\alpha)=k(C'_{\alpha})$ for some $C'_{\alpha} \in \mathcal{C}_{\alpha}$. Hence, these $C'_{\alpha}$ are pairwise compatible, i.e., for any such $\alpha<\alpha'$, $k(C'_{\alpha'})\cap k(\alpha)=k(C'_{\alpha})$. By elementarity of $k$, we have $C'_{\alpha'}\cap \alpha=C'_{\alpha}$. So the union of these $C'_{\alpha}$, say $C$, is a club subset of $\theta^+$, and $C \cap \alpha=C'_{\alpha}$ for any such $\alpha$. Hence, $C$ has order type $\theta^+$. However, any such $C'_{\alpha}$ has order type at most $\theta$, a contradiction. Hence, there is no $\omega_1$-strongly compact cardinal below $\kappa$.
\end{proof}

\begin{theorem}\label{t1}
Suppose $\kappa$ is a supercompact cardinal,
 and $\delta <\kappa$ is a $\lambda$-measurable cardinal for some uncountable cardinal $\lambda$.
Then in a Radin generic extension of $V$ that preserves the $\lambda$-measurability of $\delta$,
$\kappa$ is the least $\lambda$-strongly compact cardinal and has cofinality $\delta$.
\end{theorem}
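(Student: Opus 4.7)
The plan is to force with a Radin forcing $R_u$ derived from a supercompact embedding $j \colon V \to M$, and then verify in the generic extension $V[G]$ four statements: (a) $\mathrm{cf}(\kappa) = \delta$; (b) $\delta$ remains $\lambda$-measurable; (c) no cardinal in $[\lambda,\kappa)$ is $\lambda$-strongly compact; and (d) $\kappa$ itself is $\lambda$-strongly compact. To set things up, I would pick $j$ with $M$ closed under sufficiently long sequences (to access measure sequences of length $\delta$ in $U_{\infty}$), read off the measure sequence $u$ of length $\delta$ with $\kappa(u)=\kappa$, and take $G$ to be $R_u$-generic over $V$ below a condition forcing that the associated club $C_G \subseteq \kappa$ has order type $\omega^{-1+\delta} = \delta$.

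Parts (a) and (c) are quick. For (a), the order type of $C_G$ gives $\mathrm{cf}^{V[G]}(\kappa) = \mathrm{cf}(\delta) = \delta$. For (c), Fact \ref{fact41} rules out any $\omega_1$-strongly compact cardinal below $\kappa(u)$ in $V[G]$, and since $\lambda$-strong compactness implies $\omega_1$-strong compactness (as $\lambda$ is uncountable), no cardinal below $\kappa$ can be $\lambda$-strongly compact. For (b), the idea is to show that a $\lambda$-complete uniform ultrafilter $W \in V$ witnessing $\lambda$-measurability of $\delta$ extends naturally to such an ultrafilter in $V[G]$; this uses a careful Prikry-style analysis of how $R_u$ acts on subsets of $\delta$, relying on the $\delta$-smallness of the relevant initial segments of conditions.

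The heart of the proof is (d). Given a target $\alpha \geq \kappa$, fix a supercompact embedding $j\colon V\to M$ covering $\alpha$, a $\lambda$-complete uniform ultrafilter $W \in M$ on $\delta$ witnessing $\lambda$-measurability inside $M$, and form $i \colon M \to N = \mathrm{Ult}(M,W)$. The aim is to construct in $V[G]$ an elementary embedding $\pi \colon V \to N$ (essentially $i\circ j$, with the order of $j$ and the iterations below $\kappa$ rearranged as mentioned in the footnote of Section \ref{sec3}), together with a $\pi(R_u)$-generic filter $H$ over $N$ satisfying $\pi''G \subseteq H$. Silver's criterion then produces a lifting $\pi^*\colon V[G] \to N[H]$, and from $\pi^*$ one derives, in $V[G]$, a $\lambda$-complete fine ultrafilter on $\mathcal{P}_\kappa(\alpha)$ that witnesses $\lambda$-strong compactness of $\kappa$ up to $\alpha$.

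The main obstacle is constructing $H$ in $V[G]$. The Bagaria--Magidor approach of taking $\pi''g_G$ augmented by an $R_{i(u)\upharpoonright\delta}$-generic above breaks in the $\lambda$-measurable setting: whenever $\sup(i''\alpha) < i(\alpha)$ for some $\alpha < \delta$, a fresh gap opens in $\pi(R_u)$ at level $\pi(g_\alpha)$ which admits no external filling, since unboundedly many $\pi(g_\gamma)$ lie below. The remedy, as sketched in Section \ref{sec3}, is to fill each such gap internally by a $\pi_\alpha(u)$-iterated ultrapower: by Theorem \ref{thm1}, each such iterated ultrapower produces a sequence geometric with respect to the relevant measure sequence at the target, and Lemma \ref{c5} shows the fillings assemble coherently. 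Patching the iterated ultrapowers below $\kappa$ (one per gap) together with a further iterated ultrapower above $\kappa$ (which handles the top gap and sidesteps the closure argument of Bagaria--Magidor) yields a combined iteration, still denoted $\pi$, that lives in $V[G]$ and is guided by $G$. Applying Theorem \ref{thm1} to this combined iteration delivers the desired Radin-generic $H$, completing the lifting and hence the proof of (d).
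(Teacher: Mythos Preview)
Your outline follows the paper's strategy, but there is a genuine gap in (d) and an unnecessary complication in (b).

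The gap: you derive $u$ from one fixed supercompact embedding at the outset, and then in (d) you choose, for each target $\alpha$, a fresh supercompact $j$ covering $\alpha$. Nothing guarantees this new $j$ yields the same measure sequence $u$; but the supercompact step of the iteration must produce (an image of) $u$ as its derived measure sequence, or else the analogue of Lemma~\ref{c5} fails at that step and the geometric verification breaks down. The paper resolves this with a pigeonhole argument \emph{before} forcing: since there are at most $2^{2^{\kappa}}$ measure sequences of length $\delta$ on $\kappa$ but a proper class of supercompactness targets $\kappa'$, some single $u$ arises from $j_{\kappa'}$ for a proper class $\mathcal{S}$ of $\kappa'$; one forces with that $R_u$, and for each target uses a $\kappa' \in \mathcal{S}$ above it. A smaller point in (d): the combined iteration is not a $u$-iterated ultrapower in the sense of Definition~\ref{it} (it uses $\pi_\theta(g_{[\theta]})(\beta_\theta)$ at most steps and the supercompact measure at one distinguished step), so Theorem~\ref{thm1} applies only to each sub-block $(s(\alpha),i(\alpha)]$; the global geometric condition needs the separate argument the paper gives as Claim~\ref{cg} and Lemma~\ref{le10}.

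For (b), no Prikry-style extension is needed. The paper works below a condition $\langle (u,C)\rangle$ with $C \cap V_{\delta+1}=\emptyset$; this ensures $R_u$ adds no new subsets of $\delta$, so the ground-model witness $W$ is still a $\lambda$-complete uniform ultrafilter on $\delta$ in $V[G]$. Note also that the paper takes $W \in V$ and applies $i:V \to N_0=\mathrm{Ult}(V,W)$ as the \emph{first} step of the iteration, with the supercompact ultrapower inserted later at step $s(\delta)$; your sketch has $W \in M$ and $i$ after $j$, which is the Bagaria--Magidor order. The rearrangement alluded to in the footnote is precisely this swap, and it is what makes the sequence $w$ admit a uniform description.
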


\begin{proof}
First let us find a measure sequence $u$ on $\kappa$ with length $\delta$  to obtain a suitable Radin forcing $R_u$.

For every $\kappa'>|V_{\kappa+2}|$, let $U_{\kappa'}$ be a normal fine measure over $\mathcal{P}_{\kappa}(\kappa')$, and let
 $j_{\kappa'}:V \rightarrow M_{\kappa'} \cong \mathrm{Ult}(V,U_{\kappa'})$ be the corresponding supercompact embedding. Let $u_{\kappa'}$ be the measure sequence
  of length $\delta$ obtained from $j_{\kappa'}$. Since there are at most $2^{2^{\kappa}}$ many such measure sequences of length $\delta$,
 there exists a proper class $\mathcal{S}$ of ordinals and a measure sequence $u=\langle u(\alpha) \ | \ \alpha<\delta \rangle$
   such that for every $\kappa' \in \mathcal{S}$,  $u=u_{\kappa'}$.

Let $R_u$ be the Radin forcing for $u$. Pick a condition $\langle (u,C) \rangle \in R_u$
so that $C \cap V_{\delta+1}=\emptyset$ and it forces $\mathrm{lth}(g_{\dot{G}})=\delta$.
Then $C$ consists of measure sequences of length less than $\delta$. Let $G$ be an $R_u$-generic filter over $V$ with $\langle (u,C) \rangle \in G$. Then it adds no new subsets of $\delta$. So $\delta$ is also $\lambda$-measurable in $V[G]$. Let $g_G=\langle g_{\alpha} \ | \ \alpha<\delta \rangle$ be the generic sequence given by $G$. Then $g_G$ is geometric w.r.t. $u$ and $V$ by Theorem \ref{itu}.

Now we will define a composite embedding $\pi$.
Since $\delta$ is $\lambda$-measurable, by Proposition \ref{pu}, there exists a $\lambda$-complete uniform ultrafilter over $\delta$, say $W$. Let
$i:V \rightarrow N_0=\mathrm{Ult}(V,W)$
be the canonical map. W.l.o.g., we may assume $\mathrm{crit}(i)=\lambda$. Otherwise, let $\lambda'=\mathrm{crit}(i)$. Then we can prove that $\kappa$ is $\lambda'$-strongly compact with the same proof. Notice that as $\lambda' \geq \lambda$, $\kappa$ is also $\lambda$-strongly compact.

For simplicity of notation, let $g_{\delta}:=u$. For every $\alpha \leq \delta$, define
\begin{equation}\label{e28}
s(\alpha)=\begin{cases}
            \sup(i''\alpha), & \text{if $\alpha$ is a limit ordinal}, \\
            i(\alpha), & \text{otherwise}.
          \end{cases}
\end{equation}
Then the intervals $[s(\alpha),i(\alpha)]$ for $\alpha \leq \delta$ constitute a partition of $i(\delta)+1$ by \eqref{e28}. So for every $\theta \leq i(\delta)$, we may let $[\theta]$ be the
unique ordinal such that $s([\theta])\leq \theta \leq i([\theta])$.

Take any $\kappa' \in \mathcal{S}$ and let $U:=U_{\kappa'}$.
Then in $V[G]$, let us construct an iterated ultrapower $\langle N_{\theta},\pi_{\theta,\theta'} \ | \ \theta \leq \theta'\leq i(\delta) \rangle$ as follows:
\begin{enumerate}[leftmargin=*, widest=iii]
  \item $\pi_{\theta,\theta}=\mathrm{id}_{N_{\theta}}$ for every $\theta \leq i(\delta)$.
  \item If $\theta \leq i(\delta)$ is a limit ordinal, then
$N_{\theta}$ is the direct limit of $\langle N_{\theta_0},\pi_{\theta_0,\theta_1} \ | \ \theta_0 \leq \theta_1\leq \theta \rangle$, together with elementary embeddings
$\pi_{\theta_0,\theta}:N_{\theta_0} \rightarrow N_{\theta}$ for all $\theta_0 < \theta$.
  \item If $\theta \leq i(\delta)$ is a successor ordinal with $s([\theta]) \leq \theta < i([\theta])$, then $N_{\theta+1}$ is the transitive class isomorphic to $\mathrm{Ult}(N_{\theta},\pi_{\theta}(U))$ if $\theta=s(\delta)$, or isomorphic to $\mathrm{Ult}(N_{\theta},\pi_{\theta}(g_{[\theta]})(\beta_{\theta}))$, otherwise; $\pi_{\theta,\theta+1}:N_{\theta} \rightarrow N_{\theta+1}$ is the corresponding ultrapower map, and for every $\gamma<\theta$, let
   $\pi_{\gamma,\theta+1}=\pi_{\theta,\theta+1} \circ \pi_{\gamma,\theta}$.
  \item\label{n4} If $\theta \leq i(\delta)$ is a successor ordinal with $\theta=i([\theta])$, then $N_{\theta+1}=N_{\theta}$; $\pi_{\theta,\theta+1}=\mathrm{id}_{N_{\theta}}$, and for every $\gamma<\theta$, let $\pi_{\gamma,\theta+1}=\pi_{\gamma,\theta}$.
\end{enumerate}
For simplicity of notation, let $\pi_{\theta}=\pi_{0,\theta} \circ i$ for every $\theta \leq i(\delta)$, let $\pi=\pi_{i(\delta)}$ and $N=N_{i(\delta)}$.
Let $w=\langle \pi_{\theta}(g_{[\theta]})\upharpoonright \beta_{\theta}\ | \ \theta<i(\delta) \rangle$, let $w(i(\delta))=\pi_{i(\delta)}(g_{\delta})\upharpoonright \beta_{i(\delta)}=\pi(g_{\delta})$, and let $\vec{\kappa}=\langle \kappa(w(\theta)) \ | \ \theta \leq i(\delta) \rangle$. Then $\vec{\kappa}=\langle \pi_{\theta}(\kappa(g_{[\theta]})) \ | \ \theta \leq i(\delta) \rangle$ since $\kappa(w(\theta))=\kappa(\pi_{\theta}(g_{[\theta]}))=\pi_{\theta}(\kappa(g_{[\theta]}))$ for every $\theta \leq i(\delta)$.

In the iteration $\pi$, these identity class function in \eqref{n4} are used for simplicity of notation.

The iterated ultrapower $\langle N_{\theta},\pi_{\theta,\theta'} \ | \ \theta \leq \theta' \leq i(\delta) \rangle$ is well-founded (see \cite[Theorem 19.30]{J2003}), so $N_{\theta}$ is transitive for every $\theta \leq i(\delta)$.

\begin{claim}\label{cg}
In $V[G]$, $w$ is geometric with respect to $w(i(\delta))$ and $N$. That is,
\begin{enumerate}
  \item\label{club} The sequence $\vec{\kappa}$ is increasing continuous.
  \item\label{ev} For every limit $\theta \leq i(\delta)$ and every $A \in N \cap V[G]_{\kappa(w(\theta))+1}$,
  $A \in \mathcal{F}(w(\theta))$ if and only if $w \upharpoonright \theta$ is eventually contained in $A$.
\end{enumerate}
\end{claim}
\begin{proof}
For every $\alpha \leq \delta$ with $s(\alpha)<i(\alpha)$, note that $\langle N_{\theta},\pi_{\theta,\theta'} \ | \ s(\alpha) < \theta \leq \theta' \leq i(\alpha) \rangle$ is the $\pi_{s(\alpha)+1}(g_{\alpha})$-iterated ultrapower
of length $i(\alpha)$ over $N_{s(\alpha)+1}$, we have $w \upharpoonright (s(\alpha),i(\alpha)]$ is geometric
w.r.t. $w(i(\alpha))$ and $N_{i(\alpha)}$ by Theorem \ref{thm1}. Note also that $N_{i(\alpha)}\cap V[G]_{\kappa(w(i(\alpha)))+1}= N \cap V[G]_{\kappa(w(i(\alpha)))+1}$ and $w(i(\alpha)) \in N$, it follows that $w \upharpoonright (s(\alpha),i(\alpha)]$ is also geometric w.r.t.
$w(i(\alpha))$ and $N$. Hence for the proof of \eqref{club} and \eqref{ev}, we only need to consider the case that
$\theta \leq i(\delta)$ is a limit ordinal and $\theta = s([\theta])$.

\begin{fact}
For every $\alpha \leq \delta$, if $\gamma \leq s(\alpha)$, then $\kappa(g_{\alpha})$ is fixed by $\pi_{\gamma}$, i.e., $\pi_{\gamma}(\kappa(g_{\alpha}))=\kappa(g_{\alpha})$.
\end{fact}
\begin{proof}
If $\gamma<s(\alpha)$, then the iterated ultrapower $\langle N_{\theta},\pi_{\theta,\theta'} \ | \ s(\alpha) < \theta \leq \theta' \leq \gamma \rangle$ is taken in $V[w \upharpoonright [\gamma]]$. Since $\kappa(g_{\alpha})>\kappa(\pi_{\gamma}(g_{[\gamma]}))$ is inaccessible in $V[w \upharpoonright [\gamma]]$, it is fixed by $\pi_{\gamma}$.

If $\gamma =s(\alpha)$, then $\gamma=\lim_{\xi<\alpha}i(\xi)$. So for every $\eta<\pi_{\gamma}(\kappa(g_{\alpha}))$, there is a  $\xi<\alpha$ and an $\bar{\eta}<\pi_{i(\xi)}(\kappa(g_{\alpha}))$ such that $\pi_{i(\xi),\gamma}(\bar{\eta})=\eta$. Meanwhile, since $C_G$ is a club subset of $\kappa$ and $\pi_{i(\xi)}(\kappa(g_{\alpha'}))=\kappa(g_{\alpha'})$ for every $\alpha'$ with $\xi<\alpha'\leq \alpha$, we have $\{\pi_{i(\xi)}(\kappa(g_{\alpha'})) \ | \ \xi<\alpha'<\alpha \}=\{\kappa(g_{\alpha'}) \ | \ \xi<\alpha'<\alpha \}$ is also a club subset of $\pi_{i(\xi)}(\kappa(g_{\alpha}))=\kappa(g_{\alpha})$.
Hence, $\bar{\eta}<\pi_{i(\xi)}(\kappa(g_{\alpha'}))$ for some $\alpha'<\alpha$. Then by elementarity of $\pi_{i(\xi),\gamma}$, we have $\eta=\pi_{i(\xi),\gamma}(\bar{\eta})<\pi_{\gamma}(\kappa(g_{\alpha'}))=
\pi_{i(\alpha')}(\kappa(g_{\alpha'}))<\pi_{i(\alpha')}(\kappa(g_{\alpha}))=\kappa(g_{\alpha})$.
Hence, $\pi_{\gamma}(\kappa(g_{\alpha}))=\kappa(g_{\alpha})$.
\end{proof}

By the fact above, the sequence $\langle \kappa(w(s(\alpha))) \ | \ \alpha \leq \delta \rangle=\langle \kappa(g_{\alpha})) \ | \ \alpha \leq \delta \rangle$ is increasing continuous. Meanwhile, for every $\alpha < \delta$ with $s(\alpha)<i(\alpha)$, $\vec{\kappa} \upharpoonright (s(\alpha),i(\alpha)]$ is between $w(s(\alpha))=\kappa(g_{\alpha})$ and $w(s(\alpha+1))=w(i(\alpha)+1)=\kappa(g_{\alpha+1})$, and $\vec{\kappa} \upharpoonright (s(\delta),i(\delta)]$ is above $\kappa(g_{\delta})$. Notice also that $\vec{\kappa} \upharpoonright (s(\alpha),i(\alpha)]$ is increasing continuous, it follows that \eqref{club} holds.

Take any limit $\theta \leq i(\delta)$ with $\theta=s([\theta])$.
\begin{lemma}\label{le10}
For every $\eta \leq \theta$, if $B \in \mathcal{F}(\pi_{\eta}(g_{[\theta]}))$, then there is an $m<[\theta]$ such that
\begin{enumerate}[label=\upshape(\roman*), leftmargin=*, widest=iii]
  \item\label{e51} For every $\alpha$ with $\ m \leq \alpha<[\theta]$, we have $\pi_{\eta}(g_{\alpha}) \in B$;
  \item\label{e52} For every limit $\alpha$ with $m<\alpha<[\theta]$, we have $B \cap V[G]_{\pi_{\eta}(\kappa(g_{\alpha}))} \in \mathcal{F}(\pi_{\eta}(g_{\alpha}))$.
\end{enumerate}
\end{lemma}

\begin{proof}
We prove \ref{e51} and \ref{e52} by induction on $\eta$.  If $\eta = 0$, then since $\pi_{\eta} = i : V \rightarrow N_0 \cong
\mathrm{Ult}(V,W)$, $|W| < \kappa(g_0)$, like in \emph{Case} 2. below, we easily know that there is an $m$ such
that \ref{e51} and \ref{e52} hold for $B$. Now suppose \ref{e51} and \ref{e52} hold for every $\xi<\eta$. Then there are
two cases for $\eta$ (the case that $\eta$ is a successor ordinal with $\eta-1 = i([\eta-1])$ is trivial since
$\pi_{\eta-1}= \mathrm{id}$, so we omit it):

\emph{Case 1.} $\eta$ is a limit ordinal. Then there is an $\bar{\eta}<\eta$ and a $\bar{B} \in N_{\bar{\eta}}$ such that $\pi_{\bar{\eta},\eta}(\bar{B})=B$. Since $B \in \mathcal{F}(\pi_{\eta}(g_{[\theta]}))$ and $\pi_{\bar{\eta},\eta}$ is elementary, we have $\bar{B} \in \mathcal{F}(\pi_{\bar{\eta}}(g_{[\theta]}))$. Thus
by induction, there is an $m<[\theta]$ such that \ref{e51} and \ref{e52} hold for $\bar{\eta}$ and $\bar{B}$.
Since $\pi_{\bar{\eta},\eta}$ is elementary, it follows that for such an $m$, \ref{e51} and \ref{e52} hold for $\eta$ and $B$.

\emph{Case 2.} $\eta$ is a successor ordinal with $s([\eta-1]) \leq \eta-1<i([\eta-1])$.
Then $\eta <\theta$, $[\eta]<[\theta]$
and $\pi_{\eta-1,\eta}$ is the ultrapower map given by $\pi_{\eta-1}(g_{[\eta-1]})(\beta_{\eta-1})$.
Since $B \in \mathcal{F}(\pi_{\eta}(g_{[\theta]}))$ is in $N_{\eta}$, it can be represented by a function $f \in N_{\eta-1}$ with domain
$N_{\eta-1} \cap V[G]_{\pi_{\eta-1}(\kappa(g_{[\eta-1]}))}$,
and for every $x \in \mathrm{dom}(f)$, $f(x) \in \mathcal{F}(\pi_{\eta-1}(g_{[\theta]}))$.
Now work in $N_{\eta-1}$, and let $\bar{B}=\bigcap_{x \in \mathrm{dom}(f)}f(x)$. Then $\bar{B} \in \mathcal{F}(\pi_{\eta-1}(g_{[\theta]}))$ since $\mathcal{F}(\pi_{\eta-1}(g_{[\theta]}))$ is $\pi_{\eta-1}(\kappa(g_{[\theta]}))$-complete and $\pi_{\eta-1}(\kappa(g_{[\theta]}))>\pi_{\eta-1}(\kappa(g_{[\eta-1]}))$,
By induction, there is an $m<[\theta]$ such that \ref{e51} and \ref{e52} hold for $\eta-1$ and $\bar{B}$.

Meanwhile, $\pi_{\eta-1,\eta}(\bar{B})\subseteq B$ since $\bar{B} \subseteq f(x)$ for each $x \in \mathrm{dom}(f)$. Hence,
since $\pi_{\eta-1,\eta}$ is elementary, it follows that for such an $m$, \ref{e51} and \ref{e52} hold for $\eta$ and $B$ as well.

So in any case, \ref{e51} and \ref{e52} hold. Hence by induction, the lemma holds.
\end{proof}

Now we will prove that \eqref{ev} holds. If $A \in \mathcal{F}(w(\theta))=\mathcal{F}(\pi_{\theta}(g_{[\theta]})\upharpoonright \beta_{\theta})$, then
w.l.o.g., we may assume $A \in \mathcal{F}(\pi_{\theta}(g_{[\theta]}))$
since for every sufficient large $\xi<\theta$, the length of $w(\xi)$ is less than $\beta_{\theta}$.
Since $\theta$ is a limit ordinal, there is a $\bar{\theta}<\theta$ and an $\bar{A} \in N_{\bar{\theta}}$
such that $\pi_{\bar{\theta},\theta}(\bar{A})=A$. Note that $A \in \mathcal{F}(\pi_{\theta}(g_{[\theta]}))$ and $\pi_{\bar{\theta},\theta}$ is elementary, we have $\bar{A} \in \mathcal{F}(\pi_{\bar{\theta}}(g_{[\theta]}))$.

Now by Lemma \ref{le10}, there is an $m$ with $[\bar{\theta}]<m<[\theta]$ such that the following holds:
\begin{enumerate}[label=\upshape(\roman*), leftmargin=*, widest=iii]
  \item\label{eq61} For every $\alpha$ with $m \leq \alpha<[\theta]$, $\pi_{\bar{\theta}}(g_{\alpha}) \in \bar{A}$;
  \item\label{eq62} For every limit $\alpha$ with $m<\alpha<[\theta]$, $\bar{A} \cap V[G]_{\kappa(\pi_{\bar{\theta}}(g_{\alpha}))} \in \mathcal{F}(\pi_{\bar{\theta}}(g_{\alpha}))$.
\end{enumerate}
For every $\alpha$ with $m \leq \alpha<[\theta]$, since \ref{eq61} holds and $\pi_{\bar{\theta},\theta}$ is elementary, it follows that $w(i(\alpha))=\pi_{i(\alpha)}(g_{\alpha})=\pi_{\theta}(g_{\alpha}) \in \pi_{\bar{\theta},\theta}(\bar{A})=A$. Hence,
\begin{equation}\label{equ3}
\{ w(i(\alpha)) \ | \ m \leq \alpha<[\theta] \} \subseteq A.
\end{equation}
For every limit $\alpha$ with $m < \alpha< [\theta]$, since \ref{eq62} holds and $\pi_{\bar{\theta},s(\alpha)}$ is elementary, we have
\[
\pi_{\bar{\theta},s(\alpha)}(\bar{A}) \cap V[G]_{\kappa(\pi_{s(\alpha)}(g_{\alpha}))} \in \mathcal{F}(\pi_{s(\alpha)}(g_{\alpha})).
\]
If $s(\alpha)<i(\alpha)$, then $\beta_{\eta}<\beta_{i(\alpha)}$ for every $\eta$ with $s(\alpha) \leq \eta<i(\alpha)$. Hence by Lemma \ref{c5},
\begin{equation}\label{e34}
\{ w(\eta) \ | \ s(\alpha) \leq \eta < i(\alpha) \} \subseteq \pi_{\bar{\theta},i(\alpha)}(\bar{A}) \cap V[G]_{\pi_{i(\alpha)}(\kappa(g_{\alpha}))} \subseteq A.
\end{equation}
Therefore, we have $\{ w(\eta) \ | \ i(m) < \eta <\theta \} \subseteq A$ by (\ref{equ3}) and (\ref{e34}) for some $m<[\theta]$. In other words,
 $w \upharpoonright \theta$ is eventually contained in $A$.

If $A \notin \mathcal{F}(w(\theta))$, then $A \notin w(\theta)(\eta)$ for some $0<\eta<\beta_{\theta}$. Hence $\{ \gamma<\theta \ | \ w(\gamma) \notin A$ and
$\beta_{\gamma}=\eta \}$ is unbounded in $\theta$, which means that $w \upharpoonright \theta$ is not eventually contained in $A$.
\end{proof}

In $V[G]$, let $H\subseteq \pi(R_{u})$ be the filter given by $w$. Namely, $H$ is the set of all $p \in \pi(R_{u})$ such that
\begin{enumerate}[label=\upshape(\roman*), leftmargin=*, widest=iii]
  \item If $v$ occurs in $p$, then $v=w(\theta)$ for some $\theta \leq i(\delta)$;
  \item For any $\theta \leq i(\delta)$, $w(\theta)$ occurs in some $q \leq p$.
\end{enumerate}

It follows from Claim \ref{cg} that $H$ is $\pi(R_u)$-generic over $N$.
Now we will prove that $\pi '' G \subseteq H$.
Take any condition $p=\langle (g_{\alpha_1},A_1),...,(g_{\alpha_n},A_n) \rangle \in G$, where $\alpha_n=\delta$. We need to prove that $\pi(p) \in H$. Let $\alpha_0=-1$ for simplicity. Note that $\pi(g_{\alpha_j})=\pi_{i(\alpha_j)}(g_{\alpha_j})=w(i(\alpha_j))$ and $\pi(A_j)=\pi_{i(\alpha_j)}(A_j)$ for every $1 \leq j \leq n$, so we have
\[
\pi(p)=\langle (w(i(\alpha_1)),\pi_{i(\alpha_1)}(A_1)),\cdots,(w(i(\alpha_n)),\pi_{i(\alpha_n)}(A_n)) \rangle.
\]
Then by the definition of $H$, we only need to prove that for every $m \leq n$, $w(\theta) \in \pi_{i(\alpha_{m+1})}(A_{m+1})$ for every $\theta$ with $i(\alpha_m) <\theta< i(\alpha_{m+1})$. 
Take any such an $m$ and a $\theta$. Since $p \in G$, we have
\begin{equation}\label{e39}
\{ g_{\eta} \ | \ \alpha_m<\eta<\alpha_{m+1} \} \subseteq A_{m+1}.
\end{equation}
There are two cases:

\emph{Case 1.} $\theta=i([\theta])$. Then $[\theta]<\alpha_{m+1}$, and $g_{[\theta]} \in A_{m+1}$. Since $\pi_{i(\alpha_{m+1})}$ is elementary,
\[
w(\theta)=\pi_{i([\theta])}(g_{[\theta]})=\pi_{i(\alpha_{m+1})}(g_{[\theta]}) \in \pi_{i(\alpha_{m+1})}(A_{m+1}).
\]

\emph{Case 2.} $s([\theta])\leq \theta<i([\theta])$. Then $[\theta]$ is a limit ordinal. By (\ref{e39}) and the characterization of genericity, i.e., Theorem \ref{itu}, $A_{m+1} \cap V[G]_{\kappa(g_{[\theta]})} \in \mathcal{F}(g_{[\theta]})$. Since $\pi_{\theta}$ is elementary, it follows that $\pi_{\theta}(A_{m+1} \cap V[G]_{\kappa(g_{[\theta]})}) \in \mathcal{F}(\pi_{\theta}(g_{[\theta]}))$. Note also that $\beta_{\theta}<\beta_{i([\theta])}$, by Lemma \ref{c5} (for the case $\theta=s(\delta)$, since the measure sequence $\pi_{s(\delta)}(u)$ is obtained from $\pi_{s(\delta),s(\delta)+1}$, this lemma is also true), we have
\[
w(\theta) \in \pi_{i([\theta])}(A_{m+1} \cap V[G]_{\kappa(g_{[\theta]})})=\pi_{i(\alpha_{m+1})}(A_{m+1} \cap V[G]_{\kappa(g_{[\theta]})}) \subseteq \pi_{i(\alpha_{m+1})}(A_{m+1}).
\]
Hence in any case, $w(\theta) \in \pi_{i(\alpha_{m+1})}(A_{m+1})$. So $\pi''G \subseteq H$, and therefore, we may lift $\pi$ and obtain an elementary embedding $\pi^+:V[G] \rightarrow N[H]$ by using Silver's criterion.

Let $D=\pi_{s(\delta)+1,i(\delta)}(\pi_{s(\delta),s(\delta)+1} ''(\pi_{s(\delta)}(\kappa')))$.
Since $\pi_{s(\delta),s(\delta)+1}$ witnesses that $\pi_{s(\delta)}(\kappa)$ is $\pi_{s(\delta)}(\kappa')$-supercompact, we have $\pi_{s(\delta),s(\delta)+1} ''(\pi_{s(\delta)}(\kappa')) \in N_{s(\delta)+1}$. Note also that $\pi_{s(\delta)+1,i(\delta)}$ is elementary, we have $D=\pi_{s(\delta)+1,i(\delta)}(\pi_{s(\delta),s(\delta)+1} ''(\pi_{s(\delta)}(\kappa'))) \in N$. Meanwhile,
$\pi''\kappa' \subseteq D$ and $N \models |D|<\pi(\kappa)$. So $\pi^+$ witnesses $\kappa$ is $\lambda$-strongly compact up to $\kappa'$.

Since $\mathcal{S}$ is a proper class and $\kappa' \in \mathcal{S}$ is arbitrary, $\kappa$ is $\lambda$-strongly compact in $V[G]$.

Now by Fact \ref{fact41}, we can see that $\kappa$ is the least $\lambda$-strongly compact cardinal in $V[G]$ (actually, $\kappa$ is the least $\omega_1$-strongly compact cardinal).

This concludes the proof of Theorem \ref{t1}.
\end{proof}

\begin{corollary}\label{c2s}
Suppose $\lambda<\kappa$ are supercompact cardinals. Then for every regular cardinal $\delta$ with $\lambda \leq \delta <\kappa$, there exists a generic extension of $V$, in which $\kappa$ is the least $\lambda$-strongly compact cardinal and has cofinality $\delta$.
\end{corollary}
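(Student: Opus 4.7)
The plan is to reduce Corollary \ref{c2s} to Theorem \ref{t1}. The hypotheses of Theorem \ref{t1} require a supercompact cardinal $\kappa$ (given) together with a $\lambda$-measurable cardinal $\delta<\kappa$ for some uncountable $\lambda$; its conclusion — a Radin generic extension of $V$ preserving the $\lambda$-measurability of $\delta$ in which $\kappa$ is the least $\lambda$-strongly compact cardinal and has cofinality $\delta$ — is exactly what the corollary asserts. So the only real task is to verify, under the corollary's hypothesis, that the given regular $\delta$ with $\lambda \leq \delta < \kappa$ is $\lambda$-measurable in $V$.

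For this step I would use the supercompactness of $\lambda$. Pick a normal fine measure on $\mathcal{P}_{\lambda}(\delta)$ and let $j\colon V \to M$ be the induced $\delta$-supercompact embedding, so that $\mathrm{crit}(j) = \lambda$, $j(\lambda) > \delta$, and $M^{\delta} \subseteq M$. Then $j''\delta \in M$, $|j''\delta|^{M} = \delta$, and $j(\delta) \geq j(\lambda) > \delta$. Taking $D := j''\delta$, the embedding $j$ directly witnesses the condition in Definition \ref{dl} that $\delta$ is $\lambda$-strongly compact up to $\delta$; equivalently, by the recipe in the proof of Proposition \ref{pu}, the set $U = \{X \subseteq \delta \mid \sup(j''\delta) \in j(X)\}$ is a $\lambda$-complete uniform ultrafilter on $\delta$. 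Either way, $\delta$ is $\lambda$-measurable in $V$.

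Having secured the $\lambda$-measurability of $\delta$, I would then apply Theorem \ref{t1} to the pair $(\kappa,\delta)$ to obtain the required generic extension. There is no substantive obstacle: the corollary's content is essentially that the abstract assumption ``$\delta$ is $\lambda$-measurable'' in Theorem \ref{t1} is automatically realized by any regular cardinal sitting between two supercompacts $\lambda \leq \delta < \kappa$, and this realization is the standard derivation of $\lambda$-measurability from $\lambda$-supercompactness at level $\delta$. The nontrivial work has already been done in the proof of Theorem \ref{t1}.
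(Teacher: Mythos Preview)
Your proposal is correct and follows the same route as the paper: verify that the supercompactness of $\lambda$ makes any regular $\delta$ with $\lambda\le\delta<\kappa$ a $\lambda$-measurable cardinal, then invoke Theorem~\ref{t1}. The paper's proof is essentially a one-line version of your argument, simply asserting that ``since $\lambda$ is supercompact and $\delta$ is regular, $\delta$ is $\lambda$-measurable'' before applying Theorem~\ref{t1}; your explicit use of the $\delta$-supercompact embedding and Proposition~\ref{pu} just fills in the details behind that assertion.
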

\begin{proof}
Since $\lambda$ is a supercompact cardinal and $\delta$ is regular, it follows that $\delta$ is $\lambda$-measurable. By Theorem \ref{t1}, in some Radin generic extension that adds a Radin generic sequence of length $\delta$, $\kappa$ is the least $\lambda$-strongly compact cardinal and has cofinality $\delta$.
\end{proof}
The following proposition generalized \cite[Theorem 2.3]{BMO2014} and shows that our consistency
result (Theorem \ref{t1}) is optimal.
\begin{proposition}\label{pro3}
Suppose $\kappa$ is the least
$\lambda$-strongly compact cardinal and has cofinality $\delta$. Then $\delta$ is $\lambda$-measurable.
Namely, $\delta$ carries a $\lambda$-complete uniform ultrafilter over $\delta$.
\end{proposition}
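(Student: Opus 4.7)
The plan is to argue by contradiction: assume $\delta$ carries no $\lambda$-complete uniform ultrafilter and produce a $\lambda$-strongly compact cardinal strictly below $\kappa$. First I would dispose of the case $\kappa$ regular: then $\delta = \kappa$, and since $\kappa$ is $\lambda$-strongly compact it is in particular $\lambda$-measurable, so Proposition \ref{pu} already supplies the required ultrafilter. From now on assume $\kappa$ is singular, so $\delta < \kappa$; fix a strictly increasing sequence $\langle \kappa_\xi \ | \ \xi < \delta \rangle$ cofinal in $\kappa$ with $\kappa_0 > \delta$, and write $f(\xi) = \kappa_\xi$.

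The core observation is that, under the contradictory hypothesis, every elementary embedding $j : V \to M$ with $\mathrm{crit}(j) \geq \lambda$ must satisfy $\sup(j''\delta) = j(\delta)$; otherwise the set $\{X \subseteq \delta \ | \ \sup(j''\delta) \in j(X)\}$ would be a $\lambda$-complete uniform ultrafilter on $\delta$, exactly as in the proof of Proposition \ref{pu}. Now given any $\alpha \geq \kappa$, invoke $\lambda$-strong compactness of $\kappa$ up to $\alpha$ to obtain $j_\alpha : V \to M_\alpha$ with $\mathrm{crit}(j_\alpha) \geq \lambda$ together with $D_\alpha \in M_\alpha$ satisfying $j_\alpha''\alpha \subseteq D_\alpha$ and $|D_\alpha|^{M_\alpha} < j_\alpha(\kappa)$. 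By elementarity, inside $M_\alpha$ the function $j_\alpha(f)$ is strictly increasing and cofinal in $j_\alpha(\kappa)$ with domain $j_\alpha(\delta)$, so there is $\xi_\alpha < j_\alpha(\delta)$ with $|D_\alpha|^{M_\alpha} < j_\alpha(f)(\xi_\alpha)$. Since $\xi_\alpha < j_\alpha(\delta) = \sup(j_\alpha''\delta)$, one can pick $\zeta_\alpha < \delta$ with $j_\alpha(\zeta_\alpha) > \xi_\alpha$; strict monotonicity of $j_\alpha(f)$ then yields
\[
|D_\alpha|^{M_\alpha} < j_\alpha(f)(\xi_\alpha) < j_\alpha(f)(j_\alpha(\zeta_\alpha)) = j_\alpha(\kappa_{\zeta_\alpha}),
\]
so the same pair $(j_\alpha, D_\alpha)$ witnesses that $\kappa_{\zeta_\alpha}$ is $\lambda$-strongly compact up to $\alpha$.

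This produces a class-function $\alpha \mapsto \zeta_\alpha$ taking values in the set $\delta$. If every fibre $\{\alpha \geq \kappa \ | \ \zeta_\alpha = \zeta\}$ were bounded in $\mathrm{Ord}$ with supremum $\beta_\zeta$, then any $\alpha > \sup_{\zeta < \delta} \beta_\zeta$ would lie in no fibre, a contradiction; hence some $\zeta^* < \delta$ satisfies $\zeta_\alpha = \zeta^*$ for a proper class of $\alpha$. Since ``$\lambda$-strongly compact up to $\alpha$'' trivially implies ``$\lambda$-strongly compact up to $\alpha'$'' for every $\alpha' \leq \alpha$, this forces $\kappa_{\zeta^*} < \kappa$ to be $\lambda$-strongly compact, contradicting the minimality of $\kappa$. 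The step I expect to demand the most care is the reduction of non-$\lambda$-measurability of $\delta$ to the equality $\sup(j''\delta) = j(\delta)$ for every admissible $j$: this equality is precisely what lets the $M_\alpha$-side index $\xi_\alpha$, which a priori could live anywhere below $j_\alpha(\delta)$, be pulled back strictly below some $j_\alpha(\zeta_\alpha)$ with $\zeta_\alpha < \delta$ coming from the ground-model cofinal sequence, so that the loose bound $|D_\alpha|^{M_\alpha} < j_\alpha(\kappa)$ descends to a bound at a single rung $j_\alpha(\kappa_{\zeta_\alpha})$ of that sequence.
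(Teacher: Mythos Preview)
Your argument is correct and rests on the same core idea as the paper's: the cofinal sequence $f:\delta\to\kappa$ transfers (dis)continuity of an embedding between $\delta$ and $\kappa$, so that if $j$ is continuous at $\delta$ then the bound $|D|^{M}<j(\kappa)$ can be pushed down to $|D|^{M}<j(\kappa_\zeta)$ for some $\zeta<\delta$, making a smaller cardinal $\lambda$-strongly compact up to the same $\alpha$.

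The paper, however, packages this as a direct construction rather than a contradiction-plus-pigeonhole. It first fixes a \emph{single} ordinal $\alpha$ large enough that no $\eta<\kappa$ is $\lambda$-strongly compact up to $\alpha$ (namely $\alpha=(\sup_{\gamma<\kappa}\alpha_\gamma)^{+}$, where each $\alpha_\gamma$ witnesses failure for $\gamma$), takes one embedding $j$ witnessing $\lambda$-strong compactness of $\kappa$ up to that $\alpha$, and argues directly that $\sup(j''\kappa)<j(\kappa)$ (otherwise some $\eta<\kappa$ would be $\lambda$-strongly compact up to $\alpha$) and hence, via the cofinal sequence, $\sup(j''\delta)<j(\delta)$. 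Your pigeonhole over a proper class of $\alpha$'s is thus traded for one well-chosen $\alpha$; the paper's route is a bit shorter and yields the ultrafilter on $\delta$ explicitly, while yours makes the role of the global hypothesis ``every admissible $j$ is continuous at $\delta$'' more transparent.
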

\begin{proof}
We first prove that there exists a definable elementary embedding $j:V \rightarrow M$ with $M$ transitive such that $j(\kappa)>\sup(j''\kappa)$.
Since $\kappa$ is the least $\lambda$-strongly compact cardinal, it follows that $\kappa$ is a limit cardinal by Theorem \ref{lem3}, and for every $\gamma<\kappa$, there is an $\alpha_{\gamma}>\kappa$ such that
 $\gamma$ is not $\lambda$-strongly compact up to $\alpha_{\gamma}$. Let $\alpha=\sup(\{\alpha_{\gamma} \ | \ \gamma<\kappa \})^+$.
Then $\eta$ is not $\lambda$-strongly compact up to $\alpha$ for every $\eta<\kappa$.

By the $\lambda$-strong compactness of $\kappa$, there is a definable elementary embedding $j:V \rightarrow M$ with $M$ transitive, so that $\mathrm{crit}(j) \geq \lambda$ and there is a $D \in M$ such that
$j''\alpha \subseteq D$ and $M \models |D|<j(\kappa)$. If $\sup(j''\kappa)=j(\kappa)$, then there is a cardinal $\eta<\kappa$
such that $M \models |D|<j(\eta)$. Thus $j$ witnesses that $\eta$ is $\lambda$-strongly compact up to $\alpha$, a contradiction. Hence, $\sup(j''\kappa)<j(\kappa)$.

Now for such an embedding $j$, we claim that $\sup(j''\delta)<j(\delta)$.
If not, $j(\delta)=\sup(j''\delta)$. Since $\kappa$ is a limit cardinal and has cofinality $\delta$, there is an increasing cofinal sequence $\vec{\kappa}=\langle \kappa_{\alpha} \ | \ \alpha<\delta \rangle$ of cardinals converging to $\kappa$. By elementarity, $j(\vec{\kappa})$ is an increasing cofinal sequence on $j(\kappa)$ in $M$. Then in $V$, $\langle j(\vec{\kappa})(j(\alpha)) \ | \ \alpha<\delta \rangle$ is an increasing cofinal sequence on $j(\kappa)$ since $j(\delta)=\sup(j''\delta)$. By elementarity, $j(\vec{\kappa})(j(\alpha))=j(\vec{\kappa}(\alpha))=j(\kappa_{\alpha})$. So $\langle j(\kappa_{\alpha}) \ | \ \alpha<\delta \rangle$ is an increasing cofinal sequence on $j(\kappa)$, which means that $j(\kappa)=\sup(j''\kappa)$, a contradiction. Therefore, $j(\delta)>\sup(j''\delta)$.

Hence, $\delta$ carries a $\lambda$-complete uniform ultrafilter by the proof of Proposition \ref{pu}, and it is also $\lambda$-measurable.
\end{proof}
\subsection*{Acknowledgement}
We would like to thank Professor Joan Bagaria and Professor Liuzhen Wu for making useful suggestions, which improved clarity and readability of the paper.
The first author was supported by the China Scholarship Council (No. 202004910742) and NSFC No. 11871464.
The second author was supported by a UKRI Future Leaders Fellowship [MR/T021705/1].

\end{document}